\newtheorem{theorem}{Theorem}
\numberwithin{theorem}{section}
\newtheorem{lemma}[theorem]{Lemma}
\newtheorem{proposition}[theorem]{Proposition}
\newtheorem{conjecture}[theorem]{Conjecture}
\newtheorem{corollary}[theorem]{Corollary}
\theoremstyle{definition}
\newtheorem{definition}[theorem]{Definition}
\newtheorem{example}{Example}
\theoremstyle{remark}
\newtheorem{remark}[theorem]{Remark}
\newcommand{\cH}{\mathcal{H}}
\newcommand{\cA}{\mathcal{A}}
\newcommand{\cY}{\mathcal{Y}}
\newcommand{\cI}{\mathcal{I}}
\newcommand{\tR}{\widetilde{R}}
\newcommand{\tH}{\widetilde{H}}
\newcommand{\bs}{\boldsymbol}
\DeclareMathOperator{\dc}{dc}
\DeclareMathOperator{\Des}{Des}
\DeclareMathOperator{\spn}{span}
\begin{document}

\title[Combinatorial invariance for $R$-polynomials of elementary intervals]{Combinatorial invariance for Kazhdan--Lusztig $R$-polynomials of elementary intervals}
\author{Grant T. Barkley}
\thanks{GTB was supported by a National Science Foundation grant DMS-1854512.}
\address[Barkley]{Department of Mathematics, Harvard University, Cambridge, MA.}
\email{{\href{mailto:gbarkley@math.harvard.edu}{gbarkley@math.harvard.edu}}}

\author{Christian Gaetz}
\address[Gaetz]{Department of Mathematics, University of California, Berkeley, CA.}
\email{{\href{mailto:crgaetz@gmail.com}{{\tt gaetz@berkeley.edu}}}}

\date{\today}

\begin{abstract}
We adapt the \emph{hypercube decompositions} introduced by Blundell--Buesing--Davies--Veli\v{c}kovi\'{c}--Williamson to prove the Combinatorial Invariance Conjecture for Kazhdan--Lusztig $R$-polynomials in the case of \emph{elementary intervals} in $S_n$. This significantly generalizes the main previously-known case of the conjecture for $S_n$, that of lower intervals. 
\end{abstract}
\keywords{Kazhdan--Lusztig polynomial, $R$-polynomial, combinatorial invariance conjecture, Bruhat order}
\subjclass{05E14, 14M15}
\maketitle

\section{Introduction}

Indexed by pairs of elements $u,v$ in the symmetric group $S_n$ (or more generally, any Coxeter group $W$) the \emph{Kazhdan--Lusztig} polynomials $P_{u,v}(q)$ are of fundamental importance in geometric representation theory \cite{Kazhdan-Lusztig-1}. They mediate deep connections between canonical bases of Hecke algebras, representations of Lie algebras, and the geometry of Schubert varieties, and their study has led to the introduction of many of the central techniques in the field \cite{Beilinson-Bernstein,Brylinski-Kashiwara,Elias-Williamson, Kazhdan-Lusztig-2}. This makes Conjecture~\ref{conj:cic}, known as the Combinatorial Invariance Conjecture, all the more remarkable. It says that this rich geometric and representation-theoretic data in fact depends only on the Bruhat interval $[u,v]$ as an abstract poset (equivalently, on the cell adjacencies in the Schubert stratification). 
We call an object indexed by a pair $u \leq v$ \emph{combinatorial} if it only depends on the isomorphism type of the interval $[u,v]$.

\begin{conjecture}[Lusztig 1980s; Dyer \cite{Dyer1987}]
\label{conj:cic}
The Kazhdan--Lusztig polynomial is combinatorial. In other words, for $u,v \in W$ and $u',v' \in W'$, if $[u,v] \cong [u',v']$, then
\[P_{u,v}(q)=P_{u',v'}(q).\]
\end{conjecture}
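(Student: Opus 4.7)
The plan is to establish Conjecture~\ref{conj:cic} in the setting announced in the abstract, namely for \emph{elementary intervals} in $S_n$, by producing a recursion for $P_{u,v}$ whose inputs depend only on the poset structure of $[u,v]$. The mechanism is a \emph{hypercube decomposition} of $[u,v]$ in the sense of Blundell--Buesing--Davies--Veli\v{c}kovi\'{c}--Williamson: a distinguished family of atoms $a_1,\dots,a_k$ above $u$ whose joins span a Boolean sublattice, together with an apex element and the combinatorial data of diamond-completions within the cube. On elementary intervals, this decomposition should yield an identity expressing $P_{u,v}$ as a combinatorial combination of KL polynomials for strictly shorter intervals, giving combinatorial invariance by induction on $\ell(v)-\ell(u)$.

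First I would pin down a precise definition of elementary interval and verify that the class is closed under passage to the subintervals that appear as inputs to the recursion; ideally, each recursive input is either again elementary or is a lower interval, so that the previously-known case can be invoked. Then, for each elementary $[u,v]$, I would construct a canonical hypercube decomposition based at $u$, reading off atoms, diamond partners, and apex purely from the poset data. The central step, which I expect to be the main obstacle, is to prove the \emph{hypercube formula} itself. I would start from the Kazhdan--Lusztig recursion written in terms of $\tR$-polynomials, partition the sum $\sum_{u\leq w\leq v}\tR_{u,w}P_{w,v}$ according to which face of the cube contains $w$, and match contributions from complementary faces to force cancellation, leaving a polynomial combination of KL polynomials on smaller elementary or lower intervals.

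Two delicate points will likely dominate this main step: preserving the strict degree bound $\deg P_{u,v}<(\ell(v)-\ell(u))/2$ throughout the cancellation, which requires careful bookkeeping of the degrees contributed by each face; and verifying the combinatorial hypotheses in the elementary case (such as the existence and uniqueness of diamond partners, or a consistent ``type'' assignment to atoms) that make the cancellation go through. Once the hypercube formula is established, the induction closes easily: the base cases of length $\leq 1$ are trivial, and at each step the formula exhibits $P_{u,v}$ as a function of the isomorphism type of $[u,v]$ together with KL polynomials on strictly shorter intervals already known to be combinatorial, yielding Conjecture~\ref{conj:cic} for all elementary intervals in $S_n$.
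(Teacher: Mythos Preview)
Your high-level strategy---induct on length via a hypercube-based recurrence whose inputs are combinatorial---matches the paper, but several of your concrete choices diverge from what actually makes the argument close, and at least one of them is a genuine gap.

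First, the object you describe as a hypercube decomposition (``a distinguished family of atoms above $u$ whose joins span a Boolean sublattice, together with an apex'') is not what is used. A (strong) hypercube decomposition is a diamond-closed lower subinterval $I=[u,z]\subsetneq[u,v]$, together with, for \emph{every} $x\in I$, a hypercube cluster $\theta_x:\cA_x\to[u,v]$ encoding how antichains of out-edges leaving $I$ at $x$ assemble into hypercubes. The recurrence is for $\tR$-polynomials, not $P$-polynomials: one defines $\tH_{u,v,I}=\sum_{x\in I}\sum_{Y:\theta_x(Y)=v}q^{|Y|}\tR_{u,x}$ and proves $\tR_{u,v}=\tH_{u,v,I}$. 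The proof is not a face-matching cancellation in $\sum_w\tR_{u,w}P_{w,v}$; it is a bijection between $\prec$-increasing paths in the Bruhat graph (which count $\tR$ by Dyer) and pairs (increasing path in $I$, antichain over its endpoint), and this bijection only works when one can build a reflection order $\prec$ with a certain compatibility property~(E) relative to $I$. Your degree-bound worries about $P$ never arise.

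Second, and this is where your outline would stall: the structural fact about simple intervals that drives everything is that any diamond-closed order ideal $I\subset[u,v]$ equals $[u,v]\cap W'u$ for the reflection subgroup $W'$ generated by the atom reflections in $I$. This is proved via Bruhat interval polytopes, using linear independence of the atom roots in an essential way, and it is exactly what lets one construct a reflection order satisfying~(E) for \emph{every} strong hypercube decomposition of a simple interval. Your plan only asks for one ``canonical'' decomposition; that is not enough. The induction needs $\tR=\tH$ for \emph{all} strong hypercube decompositions on the simple side, because the decomposition you must use there is the image under the isomorphism of the \emph{standard} decomposition $I_{\mathrm{st}}$ on the (possibly non-simple) side $[u,v]$, where $\tR_{u,v}=\tH_{u,v,I_{\mathrm{st}}}$ is established separately. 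Without the coset description and the resulting universal $\tR=\tH$ on simple intervals, the transfer across the isomorphism does not close.
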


This conjecture was circulated by Lusztig in the early 1980s and was first suggested in print by Dyer \cite[Rem.~7.31]{Dyer1987}. It has received significant attention. The most general results to date have established the case of \emph{lower intervals} for general $W$, where both $u$ and $u'$ are the identity element $e$ \cite{Brenti-lower-intervals, Lower-intervals-general-type, Delanoy, DuCloux}. Other known cases include intervals in $S_n$ of length at most 8 \cite{Incitti,Incitti2007} and all intervals in the rank-three affine Weyl group of type $\widetilde{A_2}$ \cite{affine-A2}. Patimo \cite{Patimo-q-coefficient} has also shown that the coefficient of $q$ in $P_{u,v}(q)$ is combinatorial for an arbitrary interval in $S_n$. See \cite{brenti-open-problems} for a further summary of work on the Combinatorial Invariance Conjecture.

In recent work \cite{Blundell, davies}, Blundell, Buesing, Davies, Veli\v{c}kovi\'{c}, and Williamson used artificial intelligence techniques to conjecture a recurrence for the $P_{u,v}(q)$ for $S_n$ based on combinatorial structures called \emph{hypercube decompositions} of $[u,v]$. The proof of their recurrence would resolve Conjecture~\ref{conj:cic} for $S_n$. The authors developed a geometric approach towards their conjecture, but were unable to complete it. 

The \emph{$\tR$-polynomials} $\tR_{u,v}(q)$ for $u \leq v$ are an alternative family of polynomials which determine and are determined by the family of $P_{u,v}(q)$ (see Section~\ref{sec:prelim-kl}). In particular, the Combinatorial Invariance Conjecture may equivalently be stated as follows.

\begin{conjecture}[Equivalent formulation of Conj.~\ref{conj:cic}]
\label{conj:R-cic}
The $\tR$-polynomials are combinatorial. In other words, for $u,v \in W$ and $u',v' \in W'$, if $[u,v] \cong [u',v']$, then
\[\tR_{u,v}(q)=\tR_{u',v'}(q).\]
\end{conjecture}

In this article\footnote{An extended abstract describing part of this work appears \cite{fpsac-abstract} in the proceedings of FPSAC 2023.} we introduce the modified notion of a \emph{strong hypercube decomposition} $I$ of an interval $[u,v] \subset S_n$ (see Definition~\ref{def:strong-hcd}) and use this to formulate a conjectural combinatorial recurrence instead for the $\tR$-polynomials\footnote{In recent independent work \cite{Gurevich-Wang} Gurevich and Wang have interpreted the recurrence of Blundell et al. in terms of a new family of polynomials closely related to the $R$-polynomials. The forthcoming work \cite{Brenti-Marietti} of Brenti and Marietti also interprets hypercube decompositions in terms of $R$-polynomials in a different way.}. Our recurrence has the advantage of simplicity and amenability to combinatorial techniques, compared to the one in \cite{Blundell}. We define a polynomial $\tH_{u,v,I}(q)$ (see Equation~\ref{eq:H-definition}), which may be computed solely from the pair $(I,[u,v])$ up to poset isomorphism and from inductive knowledge of $\tR_{u,x}(q)$ for smaller intervals, and conjecture:

\begin{conjecture}
\label{conj:inequality}
For any strong hypercube decomposition $I \subset [u,v]$ we have
\begin{equation}
\label{eq:H-R-inequality}
\tH_{u,v,I} \geq \tR_{u,v},
\end{equation}
where we compare polynomials coefficientwise.
\end{conjecture}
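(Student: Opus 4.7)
My plan is to induct on the length $\ell(v)-\ell(u)$ of the interval. The base cases of length at most one can be handled directly, as both $\tR_{u,v}$ and $\tH_{u,v,I}$ reduce to elementary expressions that are straightforward to compare. For the inductive step I would expand both polynomials so that each side is written in terms of $\tR$-polynomials on strictly shorter intervals, and then apply the inductive hypothesis.

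Concretely, I would first unpack the definition of $\tH_{u,v,I}$ from Equation~\eqref{eq:H-definition}, expressing it as an explicit combination of $\tR$-polynomials on sub-intervals of $[u,v]$ determined by the strong hypercube decomposition $I$. On the other side, I would expand $\tR_{u,v}$ using the classical descent recursion: picking a right descent $s$ of $v$ rewrites $\tR_{u,v}$ in terms of $\tR_{u,vs}$ and $\tR_{us,vs}$. The goal is then to match terms between the two expansions, invoking the inductive hypothesis on suitable strong hypercube decompositions $I'$ of the resulting smaller intervals in order to bound each contribution to $\tR_{u,v}$ by the corresponding contribution to $\tH_{u,v,I}$, with non-negative slack.

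The main obstacle I anticipate is a compatibility issue: the $\tR$-recursion is a \emph{local} operation with respect to a single Coxeter generator, whereas $I$ is a \emph{global} structural datum on $[u,v]$, and there is no a priori reason $I$ should restrict to a strong hypercube decomposition on $[u,vs]$, $[us,vs]$, or $[us,v]$. Overcoming this will likely require a careful analysis of how strong hypercube decompositions behave under right multiplication by a descent --- ideally choosing $s$ compatibly with the hypercube piece of $I$ (for example, so that $s$ acts as an edge of the hypercube) and then producing a modified decomposition $I'$ on each sub-interval from the image of $I$. In the elementary-interval setting that the paper targets, the extra combinatorial rigidity of such intervals should make such a compatible choice of $s$ and $I'$ available, which is presumably what makes that case tractable.

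As a fallback for the general conjecture, one could try to interpret the coefficients of $\tH_{u,v,I}-\tR_{u,v}$ directly as enumerating a family of combinatorial objects associated to $(I,[u,v])$, and exhibit a sign-reversing involution or an injection that manifests non-negativity. The difficulty with this approach is that $\tH_{u,v,I}$ is defined through the decomposition $I$ while $\tR_{u,v}$ has a natural chain-theoretic interpretation in $[u,v]$, so building the bridge between the two combinatorial pictures is itself a substantial problem.
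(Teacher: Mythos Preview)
First, a framing point: the paper does \emph{not} prove Conjecture~\ref{conj:inequality} in general. What it actually establishes is the conditional result of Theorem~\ref{thm:RHinequality}(i): if there exists a reflection order satisfying property~(E1) relative to $I$, then $\tR_{u,v}\leq \tH_{u,v,I}$. The full conjecture remains open; the elementary-interval case (where in fact equality holds) is deduced by showing that simple intervals always admit a reflection order satisfying the stronger property~(E).

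Your primary approach --- induction on $\ell(u,v)$ via the descent recursion $\tR_{u,v}=q\tR_{us,vs}+\tR_{u,vs}$ --- is not the route the paper takes, and the obstacle you correctly identify is genuine and, as far as the paper is concerned, unresolved. There is no mechanism offered for transporting a strong hypercube decomposition $I\subset[u,v]$ to strong hypercube decompositions of $[u,vs]$ or $[us,vs]$, and no evident way to match the two-term descent expansion of $\tR$ against the $\tH$-sum over all $x\in I$ and antichains $Y\in\cA_x$. Even in the elementary case the paper does not argue this way; the ``compatible choice of $s$'' you hope for is never produced.

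What the paper actually does is essentially your fallback. The bridge between the hypercube picture and the chain-theoretic picture is Dyer's formula (Proposition~\ref{prop:tilde-R-counts-increasing-paths}): $\tR_{u,v}$ counts $\prec$-increasing paths in $\Gamma(u,v)$ for any reflection order $\prec$. The paper then shows (Lemma~\ref{thm:E1}) that if $\prec$ satisfies~(E1), every increasing path $u\to\cdots\to v$ has a tail lying entirely inside a single hypercube $\theta_x([\varnothing,Y])$, by repeatedly diamond-flipping the tail downward and invoking (HC4). This yields a length-preserving injection
\[
\Gamma_{\prec}(u,v)\;\hookrightarrow\;\bigl\{(\gamma,Y):\gamma\in\Gamma_{\prec}(u,x),\ x\in I,\ Y\in\cA_x,\ \theta_x(Y)=v\bigr\},
\]
whose target is counted by $\tH_{u,v,I}$. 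So the ``substantial problem'' you flag --- linking the $I$-based definition of $\tH$ to a path model for $\tR$ --- is solved precisely by choosing a reflection order adapted to $I$; without the hypothesis~(E1), the argument does not go through, and that is why the general conjecture remains open.
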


\begin{remark}
Conjecture~\ref{conj:inequality} has been verified for all intervals $[u,v] \subset S_6$ using SageMath. The smallest instance of strict inequality in (\ref{eq:H-R-inequality}) occurs for $u=132546, v=651234$ and $I=[u,z]$, where $z=612345$.
\end{remark}

There always exists a \emph{standard} hypercube decomposition $I_{\mathrm{st}} \subset [u,v]$ (see \cite[Thm.~3.7]{Blundell}) and we show in \Cref{thm:existsequality} that equality holds in (\ref{eq:H-R-inequality}) for $I_{\mathrm{st}}$. Thus Conjecture~\ref{conj:inequality} would imply that
\[
\tR_{u,v} = \min_{I} \tH_{u,v,I},
\]
where $I$ ranges over all strong hypercube decompositions. Conjecture~\ref{conj:inequality} implies Conjectures~\ref{conj:cic} and \ref{conj:R-cic} since it gives a method for computing $\tR_{u,v}(q)$ which depends only on the isomorphism type of $[u,v]$.

We prove Conjecture~\ref{conj:inequality}, and thus Conjecture~\ref{conj:R-cic}, for a class of intervals we call \emph{elementary}.

\begin{definition}
\label{def:simple-elementary}
We say the Bruhat interval $[u,v] \subset S_n$ is \emph{simple} if the roots $e_i - e_j$ corresponding to the covers $u \lessdot (ij)u \leq v$ are linearly independent. We say $[u,v] \subset S_n$ is \emph{elementary} if $[u,v] \cong [u',v']$ for some simple interval $[u',v'] \subset S_{n'}$. 
\end{definition}

\begin{example} \text{}
\begin{enumerate}
    \item Any lower interval $[e,v]$ is simple, since the simple roots $e_i-e_{i+1}$, for $i=1,\ldots,n-1$ corresponding to possible upper covers of $e$ in $[e,v]$ are linearly independent.
    \item The interval $[u,v]=[1324, 4231]$ is isomorphic to a 4-hypercube. It is not simple, but is isomorphic to the simple interval $[12345678,21436587] \subset S_8$; thus $[u,v]$ is elementary. 
\end{enumerate}
\end{example}

\begin{theorem}
\label{thm:main-theorem}
Suppose that $[u,v] \subset S_n$ is elementary and $[u,v] \cong [u',v'] \subset S_m$. Then \[\tR_{u,v}(q)=\tR_{u',v'}(q). \]
\end{theorem}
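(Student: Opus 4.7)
The plan is to prove Conjecture~\ref{conj:inequality} for simple intervals $[u,v]$. Combined with the existence of a standard hypercube decomposition $I_{\mathrm{st}}$ that achieves equality in~(\ref{eq:H-R-inequality}), this yields the identity
\[
\tR_{u,v} = \min_I \tH_{u,v,I}
\]
whenever $[u,v]$ is simple. Because $\tH_{u,v,I}$ is computable from the poset isomorphism type of $(I,[u,v])$ together with $\tR$ of strictly smaller intervals, this formula is purely combinatorial (granted the inductive hypothesis), so $\tR_{u,v}$, and hence $P_{u,v}$, depends only on the isomorphism class of $[u,v]$. Since any elementary interval is by definition isomorphic to a simple one, applying this formula to the simple representative gives $P_{u,v}=P_{u',v'}$, yielding Theorem~\ref{thm:main-theorem}.

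I would carry out the argument by induction on $\ell(v)-\ell(u)$, so that the $\tR$-values of all strictly smaller intervals appearing in $\tH_{u,v,I}$ are already known to be combinatorial. The key structural point is that for simple $[u,v]$, the linear independence of the cover roots $e_i - e_j$ at $u$ is inherited by every sub-interval $[u,y]$ with $y\le v$, so these sub-intervals are themselves simple, and therefore elementary. Sub-intervals with bottom strictly above $u$ need separate treatment: they have strictly smaller length, and I would aim to recognize them as elementary, or as lower intervals inside suitable parabolic quotients, so that the inductive hypothesis or the existing lower-interval theorem of \cite{Brenti-lower-intervals, Lower-intervals-general-type} applies.

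The main obstacle is establishing the coefficientwise inequality $\tH_{u,v,I} \ge \tR_{u,v}$ itself for an \emph{arbitrary} strong hypercube decomposition $I$ of a simple interval. The standard decomposition $I_{\mathrm{st}}$ yields equality by construction, so the challenge is to rule out any $I$ giving strictly smaller coefficients, as the remark after Conjecture~\ref{conj:inequality} shows can a priori occur. My approach is to exploit simplicity by peeling off covers of $u$ one at a time via the standard Kazhdan--Lusztig recursion for $\tR$-polynomials, reducing to strictly smaller simple sub-intervals, and then to compare the resulting recursive expansions of $\tH_{u,v,I}$ and $\tR_{u,v}$ term by term. Linear independence of the cover roots should ensure that distinct covers of $u$ behave transversally, so that the atoms of $[u,v]$ aggregate in a Boolean-like manner and the contributions on the two sides can be matched up explicitly; this matching is where the real combinatorial work of the proof will lie.
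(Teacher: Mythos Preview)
Your inductive framework is the same as the paper's, and you correctly note that lower subintervals $[u,y]$ of a simple interval are again simple. One minor point: your worry about subintervals with bottom strictly above $u$ is unnecessary, since by definition $\tH_{u,v,I}$ involves only the polynomials $\tR_{u,x}$ for $x\in I$, so only lower subintervals $[u,x]$ ever appear.

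The real gap is that proving only the \emph{inequality} $\tH_{u,v,I}\ge \tR_{u,v}$ for simple $[u,v]$ does not suffice to close the induction. From the inequality together with $I_{\mathrm{st}}$ you obtain $\tR_{u'',v''}=\min_{I''}\tH_{u'',v'',I''}$ on the simple side, and by induction $\min_I \tH_{u,v,I}=\min_{I''}\tH_{u'',v'',I''}$; combining with $\tR_{u,v}=\tH_{u,v,I_{\mathrm{st}}}$ only gives $\tR_{u,v}\ge \tR_{u'',v''}$. For the reverse inequality you would need $\tH_{u,v,J}\ge \tR_{u,v}$ for the transported ideal $J=\phi^{-1}(I''_{\mathrm{st}})$, but $[u,v]$ need not be simple, so your hypothesis does not apply there. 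What the argument actually requires is \emph{equality} $\tH_{u'',v'',I''}=\tR_{u'',v''}$ for \emph{every} strong hypercube decomposition $I''$ of the simple interval; then transporting $I_{\mathrm{st}}$ of $[u,v]$ across the isomorphism immediately gives $\tR_{u,v}=\tH_{u,v,I_{\mathrm{st}}}=\tH_{u'',v'',\phi(I_{\mathrm{st}})}=\tR_{u'',v''}$.

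The paper obtains this equality not by any recursion on $\tR$, but via a structural theorem: in a simple interval, every diamond-closed lower interval $I$ equals $[u,v]\cap W'u$ for the reflection subgroup $W'=\langle T_I\rangle$ (proved using Bruhat interval polytopes and the linear independence of $\Phi_{uv}$). This description lets one build a reflection order satisfying condition~(E) relative to $I$, and then Theorem~\ref{thm:RHinequality}(iii) yields $\tR_{u,v}=\tH_{u,v,I}$ directly. Your proposed ``peel off covers via the Kazhdan--Lusztig recursion'' approach is not only vague, it targets the wrong statement: even if it produced the inequality, it would not supply the equality the induction needs, and since it relies on descents of a particular realization it would not automatically transfer from simple intervals to merely elementary ones.
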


Our proof of Theorem~\ref{thm:main-theorem} is the first application of hypercube decompositions to prove a new instance of combinatorial invariance. Previous results have used \emph{special matchings} \cite{Brenti-lower-intervals, Lower-intervals-general-type} or classifications of small intervals \cite{affine-A2, Incitti}. This new method is promising in light of Conjecture~\ref{conj:inequality} and the fact that all intervals $[u,v] \subset S_n$ have hypercube decompositions.  This means that arbitrary intervals could be within reach of our method.

Our result is new even in the case $[u,v] \cong [e,v']$ is isomorphic to a lower interval, as the special matching recurrence for $\tR$-polynomials used in \cite{Brenti-lower-intervals, Lower-intervals-general-type} is not known to hold in that case; du Cloux \cite[\S1.3]{DuCloux} also specifically cites this case as beyond his methods. The class of elementary intervals is much richer than this, however. Example~\ref{ex:no-special-matching} gives a simple interval which has no special matchings at all, and thus is not isomorphic to a lower interval, and seems beyond the reach of previous techniques.

\begin{example}
\label{ex:no-special-matching}
If $u=21354$ and $v=52341$, then $[u,v]$ is simple and has no special matchings.
\end{example}

In Section~\ref{sec:prelim} we recall background on Bruhat order and Kazhdan--Lusztig polynomials. In Section~\ref{sec:hypercubes} we introduce strong hypercube decompositions and the $\tH$-polynomials and study their relationship to the $\tR$-polynomials. In Section~\ref{sec:dc-ideals} we show that hypercube decompositions in simple intervals take a particularly nice form; when combined with facts about $I_{\mathrm{st}}$ from Section~\ref{sec:standard}, this implies Theorem~\ref{thm:main-theorem}.

\section{Preliminaries}
\label{sec:prelim}
The background material in this section can be found in Bj\"{o}rner--Brenti \cite{Bjorner-Brenti}.

\subsection{Bruhat order}
We view the symmetric group $S_n$ as a Coxeter group with simple reflections $S=\{s_i \mid i=1,\ldots,n-1\}$ where $s_i=(i \: i{+}1)$. We write $T$ for the set of \emph{reflections} $(ij)$, the $S_n$-conjugates of the simple reflections. The \emph{root} corresponding to a reflection $t=(ij)$ is $\alpha_t=e_i-e_j,$ where $i<j$ and $e_i$ denotes the $i$-th standard basis vector in $\mathbb{R}^n$. A \emph{reflection subgroup} $W' \subset S_n$ is a subgroup generated by a set of reflections. Any reflection subgroup of $S_n$ is a product of subgroups $S_K$, the permutations of $K \subset \{1,\ldots,n\}$.

For $u \in S_n$, the \emph{length} $\ell=\ell(u)$ is the smallest number of terms in an expression $u=s_{i_1}\cdots s_{i_{\ell}}$; such an expression of minimal length is a \emph{reduced expression}. We write $\ell(u,v)$ for $\ell(v)-\ell(u)$.

\begin{definition} 
The \emph{Bruhat graph} $\Gamma$ is the edge-labelled directed graph on vertex set $S_n$ with edges $u \xrightarrow{t} tu$ whenever $t \in T$ and $\ell(u)<\ell(tu)$. The \emph{Bruhat order} $\leq$ is the partial order on $S_n$ with $u \leq v$ whenever there is a directed path from $u$ to $v$ in $\Gamma$. We let $\Gamma(u,v)$ denote the induced subgraph of the Bruhat graph on the elements of the Bruhat interval $[u,v]$. 
\end{definition}

\begin{remark}
\label{rem:bg-combinatorial-labels-not}
By a result of Dyer \cite{Dyer-bruhat-graph}, the underlying unlabelled directed graph of $\Gamma(u,v)$ is combinatorial. The labelled version, however, is not.
\end{remark}

\subsection{Kazhdan--Lusztig polynomials}
\label{sec:prelim-kl}

The \emph{descents} $\Des(u)$ of $u$ are those $s \in S$ such that $u>us$. 

\begin{definition}[Kazhdan--Lusztig \cite{Kazhdan-Lusztig-1}]
\label{def:kl-polys}
Define polynomials $R_{u,v}(q) \in \mathbb{Z}[q]$ by:
\[R_{u,v}(q) = \begin{cases} 0, &\text{ if $u \not \leq v$.} \\ 1, &\text{ if $u=v$.} \\ R_{us,vs}(q), &\text{ if $s \in \Des(u) \cap \Des(v)$.} \\ qR_{us,vs}(q) + (q-1)R_{u,vs}(q), &\text{ if $s \in \Des(v)\setminus \Des(u)$.}\end{cases}\]
Then there is a unique family of polynomials $P_{u,v}(q) \in \mathbb{Z}[q],$ the \emph{Kazhdan--Lusztig polynomials} satisfying $P_{u,v}(q)=0$ if $u \not \leq v$, $P_{v,v}(q)=1$, and such that if $u<v$ then $P_{u,v}$ has degree at most $\frac{1}{2}(\ell(u,v)-1)$ and 
\begin{equation}
\label{eq:P-definition}
q^{\ell(u,v)}P_{u,v}(q^{-1})=\sum_{a \in [u,v]} R_{u,a}(q)P_{a,v}(q).
\end{equation}
\end{definition}

\subsection{Reflection orders and $\tR$-polynomials}

\begin{proposition}
\label{prop:R-tilde-exist}
Let $u,v \in S_n$, then there exists a unique polynomial $\tR_{u,v}(q) \in \mathbb{N}[q]$ such that
\[R_{u,v}(q)=q^{\frac{\ell(u,v)}{2}}\tR_{u,v}(q^{\frac{1}{2}}-q^{-\frac{1}{2}}).\]
\end{proposition}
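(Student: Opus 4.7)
The plan is to prove both uniqueness and existence by induction on $\ell(u,v)$, mirroring the recursive definition of the $R$-polynomials. Uniqueness is essentially a change-of-variables observation: the map $q \mapsto q^{1/2} - q^{-1/2}$ (viewed as a function of $q > 0$) is injective and has infinite image, so any polynomial $\tR(x)$ satisfying the displayed identity is determined by its values on this image, hence uniquely determined by $R_{u,v}(q)$.

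For existence, I would handle the two trivial cases first: if $u \not\leq v$, take $\tR_{u,v} = 0$, and if $u = v$, take $\tR_{u,v} = 1$. For the inductive step, pick any $s \in \Des(v)$ (the case $u \not< v$ being already done). If $s \in \Des(u) \cap \Des(v)$, then $R_{u,v} = R_{us,vs}$ and $\ell(u,v) = \ell(us,vs)$, so setting $\tR_{u,v} := \tR_{us,vs}$ works by induction. The content is in the other case: if $s \in \Des(v) \setminus \Des(u)$, then $\ell(us,vs) = \ell(u,v)-2$ and $\ell(u,vs) = \ell(u,v)-1$, and the recurrence
\[R_{u,v}(q) = q R_{us,vs}(q) + (q-1)R_{u,vs}(q)\]
becomes, after dividing by $q^{\ell(u,v)/2}$ and using the inductive hypothesis,
\[\tR_{u,v}(q^{1/2}-q^{-1/2}) = \tR_{us,vs}(q^{1/2}-q^{-1/2}) + (q^{1/2}-q^{-1/2})\,\tR_{u,vs}(q^{1/2}-q^{-1/2}).\]
Since the polynomial identity $\tR_{u,v}(x) = \tR_{us,vs}(x) + x\,\tR_{u,vs}(x)$ is forced on infinitely many values of $x$, it holds as polynomials, giving a well-defined $\tR_{u,v} \in \mathbb{N}[q]$ (nonnegativity is immediate from the inductive hypothesis and the fact that the coefficient of the second term is just $x$).

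The one step that requires some care is verifying that the choice of $s \in \Des(v)$ in the inductive step is immaterial; this follows because the resulting $\tR_{u,v}$ in each case must satisfy the same defining identity with $R_{u,v}(q)$, and uniqueness (proved independently above) then forces all such constructions to agree. I expect this consistency check, together with showing that the two branches of the recursion for $R$ lead to compatible definitions, to be the only subtle point; otherwise the argument is a direct transcription of the $R$-polynomial recurrence in the new variable $x = q^{1/2} - q^{-1/2}$.
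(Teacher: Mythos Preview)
The paper does not actually supply a proof of this proposition; it is stated as background material in Section~\ref{sec:prelim}, with an implicit reference to Bj\"orner--Brenti. Your argument is the standard one found in that reference, and it is essentially correct.

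There is one small but genuine slip in your induction scheme. You write that the induction is on $\ell(u,v)$, but in the case $s \in \Des(u) \cap \Des(v)$ one has $\ell(us,vs) = \ell(u,v)$, so the pair $(us,vs)$ is not smaller for that invariant and your inductive hypothesis does not directly yield $\tR_{us,vs}$. The fix is to induct on $\ell(v)$ instead (exactly as the recursion defining the $R$-polynomials in Definition~\ref{def:kl-polys} does): since $s \in \Des(v)$ always gives $\ell(vs) < \ell(v)$, both $\tR_{us,vs}$ and $\tR_{u,vs}$ are then available by induction, and the rest of your computation---the substitution $q-1 = q^{1/2}(q^{1/2}-q^{-1/2})$ and the resulting recurrence $\tR_{u,v}(x) = \tR_{us,vs}(x) + x\,\tR_{u,vs}(x)$---goes through verbatim. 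With this correction your proof is complete; the consistency check you flag (independence of the choice of $s$) is indeed handled cleanly by the uniqueness argument.
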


Given a Bruhat interval as an abstract poset and the $\tR$-polynomials for all subintervals, Proposition~\ref{prop:R-tilde-exist} can be used to recover the $R$-polynomials for all subintervals, and (\ref{eq:P-definition}) can be used to recursively compute the $P$-polynomials. Thus a proof that the $\tR$-polynomials are combinatorial would resolve Conjecture~\ref{conj:cic}. \Cref{thm:tilde-R-counts-increasing-paths} gives a formula for the $\tR$-polynomials which is not combinatorial but which will be our main tool for working with them.

\begin{definition}
    A \emph{reflection order} is a total ordering $\prec$ on the reflections $T$ such that for all $1\leq a<b<c \leq n$ we have either
    \[ (a\:b) \prec (a\:c) \prec (b\:c) \qquad \text{or} \qquad (b\:c) \prec (a\:c) \prec (a\:b). \]
    A directed path $x_1\xrightarrow{t_1} x_2\xrightarrow{t_2} \cdots \xrightarrow{t_{k-1}} x_k$ in the Bruhat graph $\Gamma$ is said to be \emph{increasing} with respect to a reflection order $\prec$ if 
    \[ t_1 \prec t_2 \prec \cdots \prec t_{k-1}.\] 
    Denote the set of increasing paths in $\Gamma$ from $u$ to $v$ by $\Gamma_\prec(u,v)$.
\end{definition}

\begin{theorem}[Dyer \cite{Dyer1987}]
\label{thm:tilde-R-counts-increasing-paths}
    Let $\prec$ be a reflection order on $T$. Then
    \[ \tR_{u,v}(q) = \sum_{\gamma \in \Gamma_{\prec}(u,v)} q^{\ell(\gamma)}, \]
    where $\ell(\gamma)$ is the number of edges in the path $\gamma$.
\end{theorem}

\section{Strong hypercube decompositions}
\label{sec:hypercubes}
In this section we introduce a refinement of the notion of a hypercube decomposition; see \Cref{def:bbdvw-hcd} for the original definition from \cite{Blundell}. Of particular importance will be various operations on ``diamonds,'' some of which were discussed in \cite{Patimo-q-coefficient}.

\begin{definition}
Let $\cH_n$ be the \emph{hypercube graph} on $n$ elements. This is the directed graph which is the Hasse diagram of the Boolean lattice with $n$ atoms. An $n$-\emph{hypercube} is a subgraph of $\Gamma$ isomorphic to $\cH_n$.  A 2-hypercube is called a \emph{diamond}. This is a subgraph of $\Gamma$ having (distinct) vertices $\{x_1,x_2,x_3,x_4\}$ with directed edges $x_1 \to x_2\to x_4$ and $x_1\to x_3 \to x_4$. 

Given a sequence of two edges $x_1\to x_2\to x_4$ in $\Gamma$, their \emph{diamond flip} is the unique\footnotemark pair of edges $x_1\to x_3\to x_4$ such that $\{x_1,x_2,x_3,x_4\}$ is a diamond as above.

Given sets $X \subset Y \subset S_n$, we say $X$ is \emph{diamond-closed} in $Y$ if whenever $X$ contains three vertices of a diamond contained in $Y$, it also contains the fourth. 
\footnotetext{The two edges generate a dihedral reflection subgroup of $S_n$, which must be $S_2 \times S_2$ or $S_3$, so it suffices to check existence and uniqueness for these. Diamond flipping can be defined for any Coxeter group, but outside of simply-laced type the uniqueness fails and the definition is more subtle.}
\end{definition}

\begin{lemma}\label{lem:diamondfliporder}
    Let $\prec$ be a reflection order. Given a diamond with the following edge labels, such that $t_1\prec t_1'$:
    \[
\begin{tikzcd}[arrows={start anchor=center, end anchor=center,-,rmarrow=>}]
	& \phantom{a} & \\
	\ar[ur,"t_2"]& &\ar[ul,"t_2'"'] \\
	& \ar[ul,"t_1"]\ar[ur,"t_1'"'] &
\end{tikzcd},
    \]
    we have
    \[ t_1\prec t_2\succ t_2' \prec t_1'. \]
    In particular, the diamond flip of an increasing pair of edges is a decreasing pair of edges.
\end{lemma}
\begin{proof}
    By \cite[Lemma 3.1]{Dyer-bruhat-graph}, the reflection subgroup generated by $t_1,t_2,t_1',t_2'$ is dihedral. The restriction of a reflection order to a reflection subgroup is a reflection order. Hence it is enough to show the claim for diamonds in $S_2\times S_2$ and in $S_3$, which is a straightforward check.
\end{proof}

\begin{lemma}\label{thm:chaintohypercube}
    Any path $\gamma$ from $x$ to $y$ in $\Gamma$ is contained in at most one hypercube with bottom vertex $x$ and top vertex $y$.
\end{lemma}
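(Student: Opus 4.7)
The plan is to show that if $H$ and $H'$ are two hypercubes with bottom $x$ and top $y$ both containing $\gamma$, then $H=H'$. First observe that, since $\gamma$ is a path from the bottom to the top of an $n$-dimensional hypercube, it must have length exactly $n$ and be a maximal chain of both $H$ and $H'$.

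The core idea is that the entire hypercube can be reconstructed from a single maximal chain by repeated diamond flips, using the fact (noted in the footnote to the diamond flip definition) that in the simply-laced case the fourth vertex completing a diamond in $\Gamma$ is uniquely determined by the other three. I would combine this with the classical fact that any two maximal chains of the Boolean lattice $\cH_n$ are connected by a sequence of elementary swaps, each of which replaces a length-two subpath $z_{i-1}\to z_i\to z_{i+1}$ by the other length-two subpath $z_{i-1}\to z_i^*\to z_{i+1}$ through the rank-two subinterval. (Equivalently, the permutations of atoms recording the two chains differ by a single adjacent transposition.)

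The main step is an induction on the number of swaps: starting from $\gamma$, I show that after $k$ elementary swaps the resulting maximal chain $\gamma_k$ of $H$ is simultaneously a maximal chain of $H'$. The inductive step is as follows. The new middle vertex $z_i^*$ inserted by the $(k+1)$st swap is determined from the three adjacent vertices of $\gamma_k$ via the diamond flip in $\Gamma$, and is therefore independent of the ambient hypercube. Meanwhile the isomorphism $H'\cong\cH_n$ presents the rank-two subinterval of $H'$ between $z_{i-1}$ and $z_{i+1}$ as a diamond inside $\Gamma$, whose fourth vertex $z_i^{**}$ lies in $H'$; uniqueness of diamond flips then forces $z_i^{**}=z_i^*$, so $z_i^*\in H'$ and $\gamma_{k+1}\subset H'$.

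Once this is established, every vertex of $H$ lies on some maximal chain of $H$, hence on some maximal chain of $H'$, so every vertex of $H$ is also a vertex of $H'$; swapping the roles of $H$ and $H'$ yields the reverse inclusion, and the analogous argument for edges (using that every edge of $\cH_n$ lies on some maximal chain) shows that $H$ and $H'$ also share the same edge set. The main obstacle I anticipate is cleanly executing the inductive step, in particular verifying that rank-two subintervals of $H'$ embed into $\Gamma$ as honest diamonds so that the uniqueness of diamond flips in the simply-laced case can be legitimately invoked at each stage.
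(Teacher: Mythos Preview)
Your proposal is correct and follows essentially the same approach as the paper. The paper phrases it as recovering the hypercube $\cH$ as the minimal diamond-flip-closed set of edges containing $\gamma$, and then shows (exactly as you do, by identifying maximal chains with total orders and flips with adjacent transpositions) that a single maximal chain generates all of $\cH_n$ under diamond flips; your version carries out the same argument as a direct induction showing each swapped chain of $H$ also lies in $H'$.
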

\begin{proof}
    We will show that one can recover the hypercube $\cH$ containing $\gamma$ as the minimal set of edges containing $\gamma$ which is closed under taking all possible diamond flips. Indeed, note that any hypercube is closed under taking diamond flips. It remains to show that a maximal chain in $\cH_n$ generates $\cH_n$ under diamond flips. Viewing the vertices of $\cH_n$ as subsets of $Y=\{y_1,\ldots,y_n\}$, we may take without loss of generality 
    \[\gamma = (\varnothing\to \{y_1\}\to \{y_1,y_2\} \to \cdots \to Y).\]
    So we may think of maximal chains as total orders of $y_1,\ldots, y_n$. 
    Replacing the pair of edges $\{y_1,\ldots,y_{k-1}\} \to \{y_1,\ldots, y_k\} \to \{y_1,\ldots,y_{k+1}\}$ with their diamond flip corresponds to swapping $y_k$ with $y_{k+1}$ in the total order. Clearly we can realize every total order with this process, and hence we will find every maximal chain (and in particular, every edge).
\end{proof}

Consider a fixed Bruhat interval $[u,v]$. 

\begin{definition}
    Let $I\subset [u,v]$ be an order ideal and let $x\in I$. Define
    \[ \cY_x = \{ y\in [u,v]\setminus I \mid x\to y\} \]
    and 
    \[ \cA_x = \{ Y\subset \cY_x \mid Y\text{ is an antichain in Bruhat order} \}. \]
    The collection $\cA_x$ is naturally a poset under inclusion order. We also view it as a directed graph where $Y_1 \to Y_2$ if $Y_1\subset Y_2$ and $|Y_2\setminus Y_1|=1.$ Call the elements of $\cA_x$ \emph{antichains over $x$} (relative to $I$).

    A \emph{strong hypercube cluster} at $x$ (relative to $I$) consists of a function $\theta_x:\cA_x \to [u,v]$, called the \emph{hypercube map}, satisfying the following four properties: 
    \begin{enumerate}
        \item[(HC1)] $\theta_x(\varnothing) = x$.
        \item[(HC2)] If $y\in\cY_x$, then $\theta_x(\{y\}) = y$.
        \item[(HC3)] If $Y_1\to Y_2$, then $\theta_x(Y_1)\to \theta_x(Y_2)$.
        \item[(HC4)] If $|Y|=|Y'|=|Y\cap Y'|+1$ and we have a subgraph of $\Gamma(u,v)$ of the form
        \[\begin{tikzcd}[cramped,sep=small] & w & \\ \theta_x(Y)\ar[ur]& &\theta_x(Y')\ar[ul] \\ &\theta_x(Y\cap Y')\ar[ul]\ar[ur]& \end{tikzcd},\]  then $Y\cup Y'$ is an antichain and $\theta_x(Y\cup Y')= w$.
    \end{enumerate}
\end{definition}

Hypercube clusters are so-named because the hypercube map sends intervals in $\cA_x$ to hypercubes in the Bruhat graph, so the existence of a strong hypercube cluster at $x$ means that there are lots of hypercubes in $\Gamma(u,v)$ which have bottom vertex $x$.  Note that for a fixed order ideal $I$, there is at most one strong hypercube cluster at $x$ relative to $I$. In fact, if there is a strong hypercube cluster then there is at most one hypercube at $x$ with a given set of bottom edges. This follows from the following lemma.

\begin{lemma}\label{thm:bottomedges}
    Let $I$ be an order ideal in $[u,v]$ and let $x\in I$ have a strong hypercube cluster relative to $I$, with hypercube map $\theta_x:\cA_x\to [u,v]$. Let $\cI\subset \cA_x$ be any order ideal. Then for any function $\phi:\cI\rightarrow \Gamma(u,v)$ satisfying (HC1-3), we have that $\phi$ is the restriction of $\theta_x$ to $\cI$.
\end{lemma}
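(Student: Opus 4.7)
The plan is to prove, by induction on $|Y|$ for $Y \in \cI$, that $\phi(Y) = \theta_x(Y)$. The base cases $|Y| \in \{0,1\}$ are immediate: if $Y = \varnothing$ then both values equal $x$ by (HC1), and if $Y = \{y\}$ then $y \in \cY_x$ and both values equal $y$ by (HC2).

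For the inductive step, suppose $|Y| \geq 2$, choose two distinct elements $y_1, y_2 \in Y$, and set $Y_1 = Y \setminus \{y_1\}$, $Y_2 = Y \setminus \{y_2\}$. Since $\cI$ is an order ideal, $Y_1, Y_2$ and $Y_1 \cap Y_2 = Y \setminus \{y_1, y_2\}$ all lie in $\cI$, so by the inductive hypothesis $\phi$ and $\theta_x$ agree on these three antichains. Applying (HC3) to $\phi$ yields edges $\phi(Y_1 \cap Y_2) \to \phi(Y_j) \to \phi(Y)$ in $\Gamma(u,v)$ for $j = 1,2$. Using the inductive equalities $\phi(Y_j) = \theta_x(Y_j)$ and $\phi(Y_1 \cap Y_2) = \theta_x(Y_1 \cap Y_2)$, we see that the four vertices $\theta_x(Y_1 \cap Y_2), \theta_x(Y_1), \theta_x(Y_2), \phi(Y)$ are connected by the appropriate directed edges. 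Injectivity of $\phi$ guarantees they are pairwise distinct, so together they form a genuine diamond in $\Gamma(u,v)$.

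Now I would apply (HC4) to $\theta_x$, with this diamond and $w := \phi(Y)$. The hypothesis $|Y_1| = |Y_2| = |Y_1 \cap Y_2| + 1$ holds by construction, so (HC4) forces $\theta_x(Y_1 \cup Y_2) = w$; since $Y_1 \cup Y_2 = Y$, this gives $\theta_x(Y) = \phi(Y)$, completing the induction.

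The only genuinely delicate point is verifying that the four vertices really constitute a diamond (i.e., that they are distinct), since (HC4) is only invoked on subgraphs of the stated diamond form. This is precisely where the injectivity assumption on $\phi$ is used, and it is the reason injectivity is built into the hypotheses. Once distinctness is established, (HC4) does all the remaining work.
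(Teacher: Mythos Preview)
Your argument is correct and follows essentially the same route as the paper: induction on $|Y|$, using (HC1)--(HC3) to build the diamond and then invoking (HC4) with $w=\phi(Y)$ to force $\theta_x(Y)=\phi(Y)$. You are slightly more explicit than the paper about the base case $|Y|=0$ and about where injectivity of $\phi$ is used to guarantee the four vertices are distinct, but the structure of the proof is the same.
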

\begin{proof}
    It is enough to show the claim for $\cI = [\varnothing,Y]$. The proof is by induction on $|Y|$. Indeed, the case $|Y|=1$ follows since $\phi$ is determined on singletons. And if $|Y|>1$ then for any two distinct elements $y_1,y_2\in Y$ we have a subgraph of $\Gamma(u,v)$ of the form 
    \[\begin{tikzcd}[cramped,sep=small] & \phi(Y) & \\ \phi(Y\setminus\{y_1\})\ar[ur]& &\phi(Y\setminus\{y_2\})\ar[ul] \\ &\phi(Y\setminus\{y_1,y_2\})\ar[ul]\ar[ur]& \end{tikzcd}.\]
    Hence by induction and (HC4), we can conclude $\phi(Y)=\theta_x(Y)$.
\end{proof}

\begin{definition}
\label{def:strong-hcd}
A \emph{strong hypercube decomposition} of $[u,v]$ consists of an order ideal $I$ satisfying the following properties:
\begin{enumerate}
    \item[(HD1)] $I = [u,z]$ for some $z\in [u,v]$.
    \item[(HD2)] $I$ is diamond-closed in $[u,v]$.
    \item[(HD3)] For each $x$ in $I$, there is a strong hypercube cluster $\theta_x : \cA_x \to [u,v]$, relative to $I$.
\end{enumerate}
\end{definition}

\begin{remark}
Hypercube decompositions (see \Cref{def:bbdvw-hcd}) were introduced in \cite{Blundell, davies}. \Cref{thm:bottomedges} shows that our strong hypercube decompositions are always hypercube decompositions in their sense, and we do not know of any hypercube decompositions which are not strong. In particular, we will show in \Cref{sec:standard} that the standard hypercube decomposition is strong.
\end{remark}

Using the data associated to a hypercube decomposition, we now define a polynomial which is conjecturally related to the $\tR$-polynomial by Conjecture~\ref{conj:inequality}:
\begin{equation}
\label{eq:H-definition}
\tH_{u,v,I}(q) \coloneqq \sum_{x\in I}\sum_{\substack{Y\in \cA_x \\ \theta_x(Y)=v}} q^{|Y|}\tR_{u,x}(q).
\end{equation}
Here the inner sum is over antichains $Y$ over $x$ such that $\theta_x(Y)=v$. Such antichains are necessarily maximal elements in $\cA_x$. 

\begin{remark}\label{rem:isomH}
    The data of a hypercube decomposition is purely combinatorial. More precisely, if we have a poset isomorphism $\phi:[u,v]\to [u',v']$ and $I\subset [u,v]$ is a strong hypercube decomposition, then $\phi(I)$ is a strong hypercube decomposition of $[u',v']$. Furthermore, $\phi(\theta_x(Y)) = \theta_{\phi(x)}(\phi(Y))$ for all $Y\in \cA_x$, by \Cref{thm:bottomedges}. Write $I'=\phi(I)$. It follows that
    \[ \tH_{u',v',I'}(q) = \sum_{x\in I}\sum_{\substack{Y\in \cA_{x} \\ \theta_{x}(Y)=v}} q^{|Y|}\tR_{u',\phi(x)}(q).  \]
    Hence if we know $\tR_{u,x}(q) = \tR_{u',\phi(x)}(q)$ for all $x\in I$ (which is predicted to be true by the combinatorial invariance conjecture) then it follows that $\tH_{u,v,I}(q)=\tH_{u',v',I'}(q)$. 
\end{remark}

To prove a connection between the $\tR$- and $\tH$-polynomials, we will need to consider certain specially constructed reflection orders. Fix an order ideal $I$ in $[u,v]$ and consider the following properties of a reflection order $\prec$ on $T$.
\begin{itemize}
    \item[(E1)] For each $x\in I$, if $x\xrightarrow{t_1} y_1$ and $x\xrightarrow{t_2} y_2$, and $y_1 \in I$ and $y_2 \in [u,v]\setminus I$, then $t_1\prec t_2$.
    \item[(E2)] For each $x\in I$, if $y_1\xrightarrow{t_1} x$ and $x\xrightarrow{t_2} y_2$, and $y_2\in [u,v]\setminus I$, then $t_1\prec t_2$.
    \item[(E)] If $x_1\xrightarrow{t}x_2$ with $x_1,x_2\in I$, and $x\xrightarrow{t'}y$ with $x\in I$ and $y\in [u,v]\setminus I$, then $t\prec t'$.
\end{itemize}

\begin{remark}
    Property (E) implies properties (E1) and (E2). Furthermore, if $I$ satisfies (HD2), then, by \Cref{lem:diamondfliporder}, (E1) implies (E2). There are pairs $(I,[u,v])$ and $(I',[u',v'])$ which are hypercube decompositions and are isomorphic as intervals equipped with a subset, but for which $(I,[u,v])$ satisfies (E1) (respectively (E2), (E)) and $(I',[u',v'])$ does not.
\end{remark}
 Our main result on $\tH$-polynomials is the following. 
\begin{theorem}\label{thm:RHinequality}
    Let $I$ be a strong hypercube decomposition of $[u,v]$.
    \begin{enumerate}[(i)]
        \item If there exists a reflection order satisfying (E1), then $\tR_{u,v}\leq \tH_{u,v,I}$.
        \item If there exists a reflection order satisfying (E2), then $\tR_{u,v}\geq \tH_{u,v,I}$.
        \item If there exists a reflection order satisfying (E), then $\tR_{u,v}= \tH_{u,v,I}$.
    \end{enumerate}
\end{theorem}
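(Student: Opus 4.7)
The strategy is to apply Proposition~\ref{prop:tilde-R-counts-increasing-paths} to both sides and construct explicit length-preserving maps between the resulting path-type objects. Expanding each $\tR_{u,x}$ in \eqref{eq:H-definition} by the proposition gives
\[ \tH_{u,v,I}(q) = \sum_{(x, Y, \gamma_1)} q^{\ell(\gamma_1) + |Y|}, \]
where the sum ranges over triples with $x \in I$, $Y \in \cA_x$ satisfying $\theta_x(Y) = v$, and $\gamma_1 \in \Gamma_\prec(u, x)$, while $\tR_{u,v}(q) = \sum_{\gamma \in \Gamma_\prec(u,v)} q^{\ell(\gamma)}$. Each part will follow from exhibiting an appropriate length-preserving map between $\Gamma_\prec(u,v)$ and the set of triples. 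Both directions rely on the natural decomposition of a path $\gamma$: let $x$ be the last vertex of $\gamma$ lying in $I$, and split $\gamma$ into a prefix $\gamma_1$ ending at $x$ and a suffix $\gamma_2$ starting at $x$. Since vertices of $\gamma$ form a Bruhat chain and $x \in I = [u,z]$, the prefix $\gamma_1$ actually lies in $\Gamma(u, x)$.

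For part (ii), we define $\Phi(x, Y, \gamma_1) := \gamma_1 \cdot \gamma_2$, where $\gamma_2$ is an increasing maximal chain from $x$ to $v$ inside the hypercube $\theta_x([\varnothing, Y])$. Existence of such a $\gamma_2$ follows by induction on $|Y|$ from Lemma~\ref{thm:diamondfliporder}, which singles out the unique increasing chain in each diamond. The splice $\gamma_1 \cdot \gamma_2$ is increasing: the final edge of $\gamma_1$ is incoming to $x$ from within $I$ while the initial edge of $\gamma_2$ leaves $I$ at $x$, and (E2) bounds the first below the second. Injectivity of $\Phi$ holds because, using that $I$ is an order ideal and $\theta_x(\{y\}) = y \in \cY_x$, every $\theta_x(Y')$ with $Y' \neq \varnothing$ lies outside $I$; hence $x$ is recovered from $\gamma$ as its last vertex in $I$, and then $Y$ is recovered from $\gamma_2$ by Lemma~\ref{thm:chaintohypercube}. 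This yields $\tH_{u,v,I} \leq \tR_{u,v}$.

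For part (i), we define the reverse map $\Psi(\gamma) := (x, Y, \gamma_1)$, where $Y$ is the antichain such that $\gamma_2 \subset \theta_x([\varnothing, Y])$. Injectivity is immediate: two paths mapping to the same triple would yield two increasing maximal chains in the same hypercube, but by induction on dimension (via a diamond-flip argument using Lemma~\ref{thm:diamondfliporder}) there is at most one such chain. The subtle issue is well-definedness: one must show that $\gamma_2$ is always contained in some $\theta_x([\varnothing, Y])$ with $\theta_x(Y) = v$. The plan is to prove this by induction on $\ell(\gamma_2)$, assembling $Y$ edge by edge; at each step (HC4) is invoked to complete a partial hypercube from a diamond configuration in $\Gamma(u,v)$, and (E1) is required to ensure that the ``other'' vertices appearing in these diamonds lie outside $I$ so that (HC4) genuinely applies. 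This gives $\tR_{u,v} \leq \tH_{u,v,I}$.

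Part (iii) follows immediately from (i) and (ii) since (E) implies both (E1) and (E2). The main obstacle is the well-definedness of $\Psi$ in part (i): assembling the antichain $Y$ from the suffix $\gamma_2$ requires the most delicate interplay between the hypercube-cluster axioms and the reflection-order assumption (E1), and is where most of the work will lie.
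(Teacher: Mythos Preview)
Your proposal is correct and follows essentially the same approach as the paper: both directions use the path-counting interpretation of $\tR$ and the splitting of an increasing path at its last vertex in $I$, with part (ii) established via the unique-increasing-chain-in-a-hypercube argument (the paper's Lemma~\ref{thm:pathsinhypercube}) and part (i) via an inductive diamond-flip assembly of $Y$ using (HC4) and (E1) (the paper's Lemma~\ref{thm:E1}). Your identification of the well-definedness of $\Psi$ in part (i) as the main difficulty, and of the interplay between (E1) and (HC4) needed there, matches the paper's treatment precisely.
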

\begin{remark}
    In fact, we prove \Cref{thm:RHinequality} using only that $I$ is an order ideal of $[u,v]$ satisfying (HD3). However, properties (HD1) and (HD2) will be critical in \Cref{sec:diamond-closed} and thereby in proving combinatorial invariance.
\end{remark}

    We break the proof of \Cref{thm:RHinequality} into two statements relating increasing paths to hypercube clusters, one for each inequality direction. Using the description of $\tR$ as a path-counting function (\Cref{thm:tilde-R-counts-increasing-paths}), these lemmas immediately imply the theorem. We start with the easier direction.

    \begin{lemma}\label{thm:pathsinhypercube}
        Fix a strong hypercube decomposition $I$ of $[u,v]$ and let $\prec$ be a reflection order. Then for each $x\in I$ and antichain $Y\in \cA_x$, there exists a unique ordering
        \[ Y = \{y_1,y_2,\ldots, y_k\}  \]
        such that 
        \[ x \to \theta_x(y_1) \to \theta_x(y_1,y_2) \to \cdots \to \theta_x(y_1,\ldots,y_k) \]
        is an increasing path in $\Gamma_\prec(x,\theta_x(Y))$.
    \end{lemma}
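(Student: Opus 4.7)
My plan is to prove a slightly more general statement by induction on $k=|Y|$: for any $k$-hypercube $\cH$ in the Bruhat graph $\Gamma(u,v)$ with bottom vertex $b$ and top vertex $t$, there is a unique increasing maximal chain from $b$ to $t$ inside $\cH$, and this chain begins with the edge $b\to a^*$, where $a^*$ is the atom of $\cH$ whose edge-label from $b$ is $\prec$-smallest. The lemma will then follow by taking $\cH=\theta_x([\varnothing,Y])$ and noting that sub-intervals of $[\varnothing,Y]$ in $\cA_x$ also map to hypercubes by iterated use of (HC3)--(HC4), so IH applies to sub-hypercubes. Base cases $k\le 1$ are trivial, and $k=2$ is exactly Lemma~\ref{thm:diamondfliporder} (which also confirms the strengthening: in a diamond, the increasing maximal chain is the one starting with the smaller-labeled atom).

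For existence in the inductive step, I apply IH to the rank-$(k-1)$ sub-hypercube $[a^*,t]$ of $\cH$ to obtain an increasing chain $a^*\to v_2\to\cdots\to t$, and prepend the edge $b\to a^*$. Writing $v_2=a^*\vee a'$ for some atom $a'\ne a^*$, the vertices $\{b,a^*,a',v_2\}$ form a diamond in $\cH$. Since by choice of $a^*$ the bottom-left label is $\prec$-smaller than the bottom-right label, Lemma~\ref{thm:diamondfliporder} forces the label of $a^*\to v_2$ to exceed the label of $b\to a^*$, so the prepended chain is increasing.

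For uniqueness, let $c$ be any increasing chain with first edge $b\to a'$, $a'\ne a^*$, and write the $i$-th vertex of $c$ as $w_i$. Let $m\ge 2$ be the step at which $a^*$ is first incorporated, i.e., $w_m=w_{m-1}\vee a^*$. If $m<k$, then the rank-$m$ sub-hypercube $[b,w_m]$ contains $c$'s first $m$ edges as an increasing chain, and $a^*$ (which is an atom of $[b,w_m]$ because $a^*\le w_m$) remains its $\prec$-smallest-labeled atom; IH therefore forces the first edge to be $b\to a^*$, contradicting $c$. If $m=k$, the first $k-1$ edges of $c$ lie in the rank-$(k-1)$ sub-hypercube of $\cH$ whose top is the coatom not containing $a^*$, so IH forces $a'$ to equal the second-smallest atom $a^{**}$. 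Now I apply IH to $[a',t]$: using the diamond $\{b,a^*,a',a^*\vee a'\}$ (with $t_{a^*}\prec t_{a'}$) and Lemma~\ref{thm:diamondfliporder}, the label of $a'\to a^*\vee a'$ is $\prec t_{a'}$; using, for each other atom $a_j\ne a^*,a'$, the diamond $\{b,a',a_j,a'\vee a_j\}$ (with $t_{a'}\prec t_{a_j}$, since $a'=a^{**}$) and Lemma~\ref{thm:diamondfliporder}, the label of $a'\to a'\vee a_j$ is $\succ t_{a'}$. Hence $a^*\vee a'$ is the $\prec$-smallest-labeled atom of $[a',t]$, and IH forces $w_2=a'\vee a^*$; but then $a^*$ is added at step $2$, contradicting $m=k\ge 3$.

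The main technical obstacle is the $m=k$ case of uniqueness: the naive sub-hypercube containing $c$'s initial portion avoids $a^*$ entirely, so no immediate contradiction arises. The fix is to propagate the ``$\prec$-smallest atom-label'' property one step up, from $b$ to $a'$, by chaining two applications of Lemma~\ref{thm:diamondfliporder} through two distinct diamonds sharing $b$ and $a'$. The existence argument and the $m<k$ sub-case are routine by comparison.
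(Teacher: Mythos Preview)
Your argument is correct and establishes the slightly stronger statement (valid for any hypercube in $\Gamma$, not just those in the image of $\theta_x$) that the unique increasing maximal chain is the greedy one. The existence step and the $m<k$ uniqueness sub-case are routine; the $m=k$ case is handled correctly by first forcing $a'=a^{**}$ via the induction hypothesis on the coatom sub-hypercube, and then pushing the ``smallest-atom'' property from $b$ up to $a'$ using two applications of Lemma~\ref{thm:diamondfliporder}.

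The paper takes a different, non-inductive route. Rather than splitting on where $a^*$ enters the chain, it proves a local invariant: for any edge $w\xrightarrow{t} w'$ of the hypercube, the position of $t$ among the outgoing labels at $w$ equals its position among $\{t\}\cup(\text{outgoing labels at }w')$. This is shown in one stroke via the bijection $\cH_{w'}\to\cH_w\setminus\{t\}$ coming from diamond-flipping, together with Lemma~\ref{thm:diamondfliporder}. It follows that along any increasing path the sequence of these positions is weakly increasing, and since the last edge of a maximal chain necessarily has position $1$, every position equals $1$; hence the path is greedy. This gives existence and uniqueness simultaneously and amounts to showing that the reflection order induces an EL-type labeling on any hypercube in $\Gamma$. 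Your induction is more hands-on and requires isolating the delicate $m=k$ case, whereas the paper's invariant avoids all case analysis at the cost of introducing the slightly less obvious rank-preservation idea; both ultimately rest on the same diamond-flip lemma.
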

    \begin{proof}
        Let $\cH$ be the hypercube which is the image of $[\varnothing,Y]$ under $\theta_x$. 
        If $w$ is a vertex of $\cH$, define
        \[\cH_w \coloneqq \{t\mid w\xrightarrow{t} w' \text{ is an edge of }\cH\}.\]
        We claim that if $w\xrightarrow{t}w'$, and $t$ is the $n$-th smallest element ($n=1,2,\ldots$) of $\cH_w$, then $t$ is the $n$-th smallest element of $\{t\}\cup \cH_{w'}$. This implies that if
        \[ w_0=x \xrightarrow{t_1} w_1 \xrightarrow{t_2} \cdots \xrightarrow{t_r} w_r \]
        is an increasing path in $\cH$, such that $t_i$ is the $n_i$-th smallest element of $\cH_{w_{i-1}}$ for all $i$, then $n_1\leq n_2\leq \cdots \leq n_{r}$. Notice that the lemma follows from this fact, since when $r=k$ we have that $|\cH_{w_{r-1}}| = 1$ which is only possible if $n_{r}=1$. Hence an increasing path can reach the top of $\cH$ if and only if it is the ``greedy'' path built from bottom to top by choosing the smallest possible edge label at each branch.

        We now prove the claim. For any $t'\in \cH_{w'}$, consider the diamond below.
        \[\begin{tikzcd}[cramped,sep=small] & w'' & \\ w'\ar[ur,"t'"]& &w'''\ar[ul] \\ &w\ar[ul,"t"]\ar[ur,"\widetilde{t}'"']& \end{tikzcd}\]
        The map $\cH_{w'}\to \cH_{w}$ sending $t'\mapsto \widetilde{t}'$ is injective, with image $\cH_{w}\setminus\{t\}$. This is due to a special property of hypercubes, which is that any pair of outgoing edges from a given vertex is the bottom half of a unique diamond. Furthermore, \Cref{lem:diamondfliporder} shows that $t\prec \widetilde t'$ if and only if $t\prec t'$. In particular, if $t$ is the $n$-th smallest element of $\{t\}\cup \cH_{w'}$ then $t$ is also the $n$-th smallest element of $\cH_{w}$.
    \end{proof}
    \begin{remark}
        This is essentially a proof that reflection orders give EL-labelings of hypercubes, even for hypercubes which are not Bruhat intervals.
    \end{remark}

    \begin{corollary}
        In the situation of \Cref{thm:pathsinhypercube}, if $\prec$ satisfies (E2), then 
        \[\tR_{u,v}\geq \tH_{u,v,I}.\]
    \end{corollary}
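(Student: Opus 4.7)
The plan is to combine \Cref{prop:tilde-R-counts-increasing-paths}, which expresses $\tR_{u,v}$ and each $\tR_{u,x}$ as generating functions for increasing paths, with \Cref{thm:pathsinhypercube}, which provides for each $x \in I$ and each antichain $Y\in \cA_x$ with $\theta_x(Y)=v$ a canonical increasing path $\sigma_{x,Y}$ from $x$ to $v$ of length $|Y|$. Expanding the definition of $\tH_{u,v,I}$ via the path formula gives
\[ \tH_{u,v,I}(q) = \sum_{x\in I}\sum_{\substack{Y\in \cA_x \\ \theta_x(Y)=v}}\sum_{\gamma'\in \Gamma_\prec(u,x)}q^{\ell(\gamma')+|Y|}, \]
so it suffices to construct a length-preserving injection $\Phi$ from the triples $(x,Y,\gamma')$ indexing this sum into $\Gamma_\prec(u,v)$. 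The natural candidate is concatenation: $\Phi(x,Y,\gamma')\coloneqq \gamma'\cdot \sigma_{x,Y}$, which is a path from $u$ to $v$ of length $\ell(\gamma')+|Y|$.

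Two verifications are required: (a) $\Phi(x,Y,\gamma')$ is increasing, and (b) $\Phi$ is injective. For (a), since edges of $\Gamma$ rise in Bruhat order, every vertex of $\gamma'$ lies in $[u,x]\subseteq I$; in particular, when $\gamma'$ is nonempty its final edge $y_1\xrightarrow{t_1}x$ has both endpoints in $I$, while the initial edge of $\sigma_{x,Y}$ is $x\xrightarrow{t_2}\theta_x(y_1)=y_1$ with $y_1\in \cY_x\subseteq [u,v]\setminus I$. Hypothesis (E2) is tailor-made for this splicing and yields $t_1\prec t_2$; since $\gamma'$ and $\sigma_{x,Y}$ are each already increasing, $\Phi(x,Y,\gamma')\in \Gamma_\prec(u,v)$. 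For (b), since $I$ is an order ideal and paths in $\Gamma$ are Bruhat-monotone, any path in the image of $\Phi$ crosses from $I$ to its complement at a unique vertex, which must be $x$; this determines the split into $\gamma'$ and $\sigma_{x,Y}$. \Cref{thm:chaintohypercube} then recovers the hypercube $\theta_x([\varnothing,Y])$ as the unique one with bottom $x$ and top $v$ containing $\sigma_{x,Y}$, and $Y$ is read off as its top element.

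With (a) and (b) in hand, each term $q^{\ell(\gamma')+|Y|}$ on the right-hand side above matches a distinct term $q^{\ell(\Phi(x,Y,\gamma'))}$ in $\sum_{\gamma\in \Gamma_\prec(u,v)}q^{\ell(\gamma)} = \tR_{u,v}(q)$, yielding $\tH_{u,v,I}\leq \tR_{u,v}$. The delicate step is (a): the entire role of hypothesis (E2) is to guarantee that the label at the splicing vertex $x$ points the correct way, and without it the concatenated path would generically fail to be increasing. The reverse inequality under (E1) will then require a dual argument that starts with an increasing path in $\Gamma_\prec(u,v)$ and uses the hypercube structure to peel off a suffix, rather than assembling a path by concatenation.
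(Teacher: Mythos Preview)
Your argument is correct and follows essentially the same route as the paper: expand $\tH$ via the path formula for $\tR_{u,x}$, concatenate each $\gamma'\in\Gamma_\prec(u,x)$ with the canonical increasing hypercube path from \Cref{thm:pathsinhypercube}, use (E2) to check the splice is increasing, and recover $(x,\gamma',Y)$ from the concatenated path via ``last vertex in $I$'' plus \Cref{thm:chaintohypercube}. One cosmetic slip: you use $y_1$ both for the penultimate vertex of $\gamma'$ (in $I$) and for the first element of the ordered antichain $Y$ (in $\cY_x\subseteq[u,v]\setminus I$); renaming one of them would avoid confusion.
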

    \begin{proof}
        The right hand side counts increasing paths $\gamma$ from $u$ which end at an element $x\in I$, equipped with an antichain $Y$ over $x$ for which $\theta_x(Y)=v$. These are weighted by $q^{\ell(\gamma)+|Y|}$. We need to construct a weight-preserving injection from these objects into $\Gamma_\prec(u,v)$. To do so, simply take the pair $(\gamma,Y)$ to the path $\widetilde\gamma$ from $u$ to $v$ which begins with $\gamma$ and ends with the path from \Cref{thm:pathsinhypercube}. The length of $\widetilde\gamma$ is $\ell(\gamma)+|Y|$, so the construction is weight-preserving. We need to show that the construction gives an increasing path and that it is an injection.

        Let $x'\xrightarrow{t_1} x$ be the last edge of $\gamma$ (if this does not exist then we are done). And let $x\xrightarrow{t_2} \theta_x(y_1)$ be the first edge of the path from the lemma. To show that $\widetilde\gamma$ is an increasing path, it suffices to show that $t_1\prec t_2$. But this is guaranteed by property (E2).

        Now we need to show that the assignment $(\gamma,Y)\mapsto \widetilde\gamma$ is an injection. First note that we can recover $x$ from $\widetilde\gamma$ as the last vertex in $I$ on the path, and $\gamma$ as the initial section of $\widetilde\gamma$ ending at $x$. 
        Then \Cref{thm:chaintohypercube} lets us recover $Y$ as the set of atoms of the hypercube generated by the remainder of the path under diamond flips.
    \end{proof}

    \begin{lemma}\label{thm:E1}
        If $I$ is a strong hypercube decomposition of $[u,v]$ and $\prec$ is a reflection order satisfying (E1), then any path in $\Gamma_\prec(u,v)$ ends with a sequence of the form
        \[ \cdots \to x \to \theta_x(y_1) \to \theta_x(y_1,y_2) \to \cdots \to \theta_x(y_1,y_2,\ldots,y_k) \]
        for some $x\in I$ and $\{y_1,\ldots,y_k\}\in \cA_x$.
    \end{lemma}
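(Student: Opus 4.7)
The plan is to work on the tail of the path after it leaves $I$ and to construct the antichain inductively from bottom to top. Let $\gamma\colon u = x_0 \to x_1 \to \cdots \to x_m = v$ be a path in $\Gamma_\prec(u,v)$ and let $x := x_j$ be the last vertex of $\gamma$ lying in $I$ (which exists because $u\in I$). Setting $k = m - j$, the tail satisfies $x_{j+1},\ldots,x_{j+k}\in [u,v]\setminus I$, so $x_{j+1}\in \cY_x$ when $k\geq 1$. I will prove by induction on $\ell\in\{0,1,\ldots,k\}$ that there are antichains $Y_0\subsetneq Y_1\subsetneq\cdots\subsetneq Y_\ell$ in $\cA_x$ with $|Y_\ell|=\ell$, $\theta_x(Y_\ell)=x_{j+\ell}$, and such that $x=x_j\to x_{j+1}\to\cdots\to x_{j+\ell}$ is the unique greedy increasing path in the hypercube $\theta_x([\varnothing,Y_\ell])$ from \Cref{thm:pathsinhypercube}. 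Taking $\ell=k$ will then yield the lemma, and the cases $\ell\leq 1$ are immediate from (HC1)--(HC2).

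For the inductive step $\ell\to\ell+1$, I will produce a candidate atom $y_{\ell+1}$ by \emph{pushing the edge $x_{j+\ell}\to x_{j+\ell+1}$ down to $x$} through a sequence of $\ell$ diamond flips. Setting $w^{(0)} := x_{j+\ell+1}$, I define $w^{(i)}$ for $i=1,\ldots,\ell$ to be the unique vertex making $x_{j+\ell-i}\to w^{(i)}\to w^{(i-1)}$ the diamond flip of $x_{j+\ell-i}\to x_{j+\ell-i+1}\to w^{(i-1)}$. \Cref{thm:diamondfliporder}, applied inductively, guarantees that the pair being flipped at each step is increasing (the new label on $x_{j+\ell-i+1}\to w^{(i-1)}$ remains strictly above $t_{j+\ell-i+1}$), so the procedure is well-defined and takes place inside $[u,v]$. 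I then set $y_{\ell+1} := w^{(\ell)}$, producing an edge $x\to y_{\ell+1}$ in $\Gamma(u,v)$.

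The key verification is that $y_{\ell+1}\in\cY_x$, that is, $y_{\ell+1}\notin I$. For $1\leq i<\ell$ the predecessor $x_{j+\ell-i}$ already lies in $[u,v]\setminus I$, and because $I$ is an order ideal the relation $w^{(i)}>x_{j+\ell-i}$ gives $w^{(i)}\notin I$ for free. The case $i=\ell$ is the crux: the predecessor is $x\in I$, and here neither diamond-closedness nor the order-ideal property alone suffices. Instead I argue by contradiction. If $w^{(\ell)}\in I$, then $x\to w^{(\ell)}$ is an edge from $x$ into $I$, while $x\to x_{j+1}$ is an edge from $x$ into $\cY_x$; hypothesis (E1) therefore forces the label of $x\to w^{(\ell)}$ to be $\prec t_{j+1}$. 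On the other hand, the cumulative diamond-flip inequalities from \Cref{thm:diamondfliporder} force this label to be $\succ t_{j+1}$, a contradiction. This is the only place where hypothesis (E1) enters the argument, and I expect it to be the main conceptual point.

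It remains to show that $Y_{\ell+1} := Y_\ell\cup\{y_{\ell+1}\}$ is an antichain in $\cA_x$ with $\theta_x(Y_{\ell+1})=x_{j+\ell+1}$. I will establish the stronger identity
\[ \theta_x(Y_m\cup\{y_{\ell+1}\}) = w^{(\ell-m)} \qquad \text{for all } m=0,1,\ldots,\ell \]
by an inner induction on $m$. The base $m=0$ is (HC2). For the step $m\to m+1$, the flip labelled $\ell-m$ produces the diamond with vertex set $\{x_{j+m},\,x_{j+m+1},\,w^{(\ell-m)},\,w^{(\ell-m-1)}\}$; these are respectively $\theta_x(Y_m)$, $\theta_x(Y_{m+1})$, $\theta_x(Y_m\cup\{y_{\ell+1}\})$, and the top of the diamond. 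Applying (HC4) with $Y=Y_{m+1}$ and $Y'=Y_m\cup\{y_{\ell+1}\}$ (which are distinct because the two middle vertices of the diamond are distinct) yields $\theta_x(Y_{m+1}\cup\{y_{\ell+1}\})=w^{(\ell-m-1)}$ together with the antichain property. Setting $m=\ell$ gives $\theta_x(Y_{\ell+1})=x_{j+\ell+1}$, and the greedy-path claim for $Y_{\ell+1}$ then follows from the uniqueness asserted in \Cref{thm:pathsinhypercube}, closing the outer induction.
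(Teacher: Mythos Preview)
Your argument is correct and follows essentially the same route as the paper's own proof: take the last vertex $x$ of the path lying in $I$, push the final edge of the tail down to $x$ via successive diamond flips, use the order inequalities from \Cref{thm:diamondfliporder} together with (E1) to force the new atom $y_{\ell+1}$ into $\cY_x$, and then run an inner induction with (HC4) to climb back up and identify each $w^{(\ell-m)}$ as $\theta_x(Y_m\cup\{y_{\ell+1}\})$. The only cosmetic differences are that you carry the full nested chain $Y_0\subsetneq\cdots\subsetneq Y_\ell$ (and the greedy-path description) through the outer induction, whereas the paper records only what is needed, and that your justification ``so the procedure is well-defined and takes place inside $[u,v]$'' is slightly misplaced: diamond flips in $S_n$ are always defined and automatically stay in $[u,v]$ since $u\le x_{j+\ell-i}<w^{(i)}<w^{(i-1)}\le v$; the increasing-pair observation is really only needed later for the contradiction with (E1).
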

    \begin{proof}
        Let $\gamma\in \Gamma_\prec(u,v)$ be an increasing path and let $x$ be the last vertex of $\gamma$ which is in $I$. Let
        \[ w_0=x \xrightarrow{e_1} w_1 \xrightarrow{e_2} \cdots \xrightarrow{e_n} w_n \]
        be the vertices and edge labels on the tail of $\gamma$ starting at $x$. We will induct on $n$ to show that $w_{n-1}\to w_n$ is in the image of $\theta_x$. We may assume by induction that there is some $Y=\{y_1,\ldots, y_{n-1}\}$ in $\cA_x$ such that $\theta_x(y_1,\ldots, y_i) = w_i$ for all $1\leq i < n$. Our strategy will be to determine the element $y_n\in \cY_x$ using a similar idea as in the proof of \Cref{thm:chaintohypercube}: by systematically diamond flipping $\gamma$ from top to bottom, the new outgoing edge from $x$ that we create should be $x\to y_n$. 

        Let us introduce some notation to keep track of the process. The fully diamond flipped path will be labeled
        \[ w_0'=x \xrightarrow{e_{1}'} w_1' \xrightarrow{e_2'} \cdots \xrightarrow{e_n'}w_n'=w_n. \]
        Set $f_n=e_n$. We (reverse) inductively construct the flipped path by defining the edge pair $w_{i-1}\xrightarrow{f_i} w_i'\xrightarrow{e_{i+1}'}w_{i+1}'$ to be the diamond flip of $w_{i-1}\xrightarrow{e_i}w_i\xrightarrow{f_{i+1}} w_{i+1}'$. The following illustrates one of the diamonds:
        \[
	\begin{tikzcd}
		& w'_{i+1}& \\
		w'_{i}\ar[ur,"e_{i+1}'"]& & w_{i}\ar[ul,"f_{i+1}"'] \\
		& w_{i-1}\ar[ur,"e_i"']\ar[ul,"f_i"] & 
	\end{tikzcd}.
	\]

        Because by assumption $e_1\prec e_2 \prec \cdots \prec e_n$, we can show inductively that $e_i \prec f_i$ using \Cref{lem:diamondfliporder}. We find in particular that $e_1\prec f_2$; then applying \Cref{lem:diamondfliporder} once more gives $f_1\succ e_1$. Now if $w_1'$ were in $I$, then (E1) plus this last inequality would imply that $w_1$ is in $I$. But this is a contradiction, since $w_1=y_1\in \cY_x$. Hence $w_1'$ is not in $I$. But that means $w_1'\in \cY_x$. It remains to see that $Y\cup\{w_1'\} \in \cA_x$ and that $\theta_x(Y\cup\{w_1'\})=w_n$.

        We will show by induction on $i$ that $y_1,\ldots,y_i,w_1'$ are pairwise incomparable and that $\theta_x(y_1,\ldots,y_i,w_1') = w_{i+1}'$. The base case is clear. If $i\geq 1$, then the inductive hypothesis lets us write our diamond above as
        \[
	\begin{tikzcd}
		& w'_{i+1}& \\
		\theta_x(y_1,\ldots,y_{i-1},w_1')\ar[ur,"e_{i+1}'"]& & \theta_x(y_1,\ldots,y_i)\ar[ul,"f_{i+1}"'] \\
		& \theta_x(y_1,\ldots,y_{i-1})\ar[ur,"e_i"']\ar[ul,"f_i"] & 
	\end{tikzcd}.
	\]
        Now apply (HC4) and we are done.
    \end{proof}

    \begin{corollary}
        In the situation of \Cref{thm:E1}, we have 
        \[\tR_{u,v}\leq \tH_{u,v,I}.\]
    \end{corollary}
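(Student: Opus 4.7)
The plan is to construct a weight-preserving injection from $\Gamma_\prec(u,v)$ into the set of triples $(x, Y, \gamma')$ with $x \in I$, $Y \in \cA_x$ satisfying $\theta_x(Y) = v$, and $\gamma' \in \Gamma_\prec(u,x)$. Expanding $\tR_{u,x}(q)$ as a generating function for $\Gamma_\prec(u,x)$ via Proposition~\ref{prop:tilde-R-counts-increasing-paths}, the weighted count of such triples (with weight $q^{\ell(\gamma')+|Y|}$) equals $\tH_{u,v,I}(q)$, so an injection of this form immediately yields $\tR_{u,v} \leq \tH_{u,v,I}$.

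Given $\gamma \in \Gamma_\prec(u,v)$, I would let $x$ be the last vertex of $\gamma$ lying in $I$, which exists because $u \in I$. By \Cref{thm:E1}, the portion of $\gamma$ from $x$ onward has the form $x \to \theta_x(y_1) \to \cdots \to \theta_x(y_1,\ldots,y_k) = v$ for some antichain $Y = \{y_1,\ldots,y_k\} \in \cA_x$ with $\theta_x(Y) = v$. Let $\gamma'$ be the initial segment of $\gamma$ from $u$ to $x$; since its edge labels form an initial subsequence of those of $\gamma$, $\gamma'$ remains increasing, so $\gamma' \in \Gamma_\prec(u,x)$. The identity $\ell(\gamma) = \ell(\gamma') + |Y|$ then shows that the assignment $\gamma \mapsto (x, Y, \gamma')$ is weight-preserving.

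For injectivity, I would observe that the triple $(x, Y, \gamma')$ determines $\gamma$ uniquely: its initial segment is $\gamma'$, and its terminal segment is forced to be the unique increasing path from $x$ to $\theta_x(Y) = v$ supplied by \Cref{thm:pathsinhypercube}. Summing over all triples then gives the stated inequality. All of the substantive work is already encapsulated in \Cref{thm:E1}; the remaining argument is essentially bookkeeping, and I do not anticipate a genuine obstacle.
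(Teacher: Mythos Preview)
Your proposal is correct and follows essentially the same route as the paper: split an increasing path at its last vertex $x\in I$, invoke \Cref{thm:E1} to see the tail lies in a hypercube $\theta_x([\varnothing,Y])$, and use the uniqueness in \Cref{thm:pathsinhypercube} for injectivity. The only cosmetic difference is that the paper also cites \Cref{thm:chaintohypercube} to argue that $Y$ is uniquely recoverable from the tail, whereas your formulation sidesteps this by observing that any valid triple $(x,Y,\gamma')$ reconstructs $\gamma$, which already suffices for injectivity.
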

    \begin{proof}
        This is the converse of the setup in the previous corollary. We would like to set up an injection from $\widetilde\gamma\in \Gamma_\prec(u,v)$ into the set of triples $(\gamma,x,Y)$ of increasing paths $\gamma$ ending at a vertex $x\in I$ and $Y\in \cA_x$ such that $\theta_x(Y)=v$. Furthermore, this injection should take the length $\ell(\widetilde\gamma)$ to the value $\ell(\gamma)+|Y|$. As one might imagine, we take $x$ to be the last vertex of $I$ on $\widetilde\gamma$, take $\gamma$ to be the truncation of $\widetilde\gamma$ up to $x$, and use \Cref{thm:E1} to find $Y \in \cA_x$ such that the remainder of $\widetilde\gamma$ stays in the image of $\theta_x$. By \Cref{thm:chaintohypercube}, we can recover the set $Y$ constructed in the \Cref{thm:E1} from the path. 
        
        This gives us our triple $(\gamma,x,Y)$. That the assignment $\widetilde\gamma\mapsto (\gamma,Y)$ is an injection follows from the uniqueness part of \Cref{thm:pathsinhypercube}. And we have $\ell(\widetilde\gamma)-\ell(\gamma) = |Y|$ by the description in \Cref{thm:E1}, so we are done.
    \end{proof}

\section{Diamond-closed order ideals}
\label{sec:diamond-closed}
In this section we show that diamond-closed order ideals in simple intervals are given by intersections with cosets of reflection subgroups, so that $\tR=\tH$ for these intervals.

\label{sec:dc-ideals}
Write $A_{uv}$ for the set of \emph{atoms} (elements covering $u$) in $[u,v]$, $T_{uv}$ for the corresponding set $\{au^{-1} \mid a \in A_{uv}\}$ of reflections, and $\Phi_{uv}$ for the corresponding set of roots. For $I \subset [u,v]$, define $A_I\coloneqq A_{uv} \cap I, T_I\coloneqq \{au^{-1} \mid a \in A_I\},$ and $\Phi_I$ to be the set of roots corresponding to $T_I$.

\begin{definition}
    For any $X \subset Y \subset S_n$, there is a unique smallest set $\dc_Y(X) \subset Y$ containing $X$ and diamond-closed in $Y$:
\[\dc_Y(X) \coloneqq \bigcap_{\substack{D: X \subset D \subset Y \\ D \text{ diamond-closed}}} D,\]
the \emph{diamond closure} of $X$ in $Y$.
\end{definition}

\begin{lemma}
\label{lem:dc-ideal-is-dc-of-atoms}
Let $I \subset [u,v]$ be a nonempty diamond-closed order ideal. Then \[
I=\dc_{[u,v]}(\{u\} \cup A_I).
\]
\end{lemma}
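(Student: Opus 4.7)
Write $D \coloneqq \dc_{[u,v]}(\{u\} \cup A_I)$. The inclusion $D \subset I$ is automatic: $u \in I$ because $I$ is a nonempty order ideal in $[u,v]$, and $A_I \subset I$ by definition, so $I$ is one of the diamond-closed sets in $[u,v]$ containing $\{u\}\cup A_I$; hence it contains the intersection $D$. The content of the lemma is therefore the reverse inclusion $I \subset D$.

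I would establish $I \subset D$ by induction on $\ell(u,x)$ for $x \in I$. The base cases $\ell(u,x) \in \{0,1\}$ are immediate, since then $x = u$ or $x \in A_I$, and both $u$ and $A_I$ lie in $D$ by definition. For the inductive step, suppose $x \in I$ with $\ell(u,x) \geq 2$. The idea is to exhibit a rank-$2$ Bruhat subinterval of $[u,v]$ whose top is $x$ and whose other three vertices lie in $D$; diamond-closedness of $D$ in $[u,v]$ will then force $x \in D$. Concretely, since $[u,x]$ is graded by $\ell(u,\cdot)$, pick any $z \in [u,x]$ with $\ell(u,z) = \ell(u,x) - 2$ — for instance, the element two steps below $x$ on a maximal chain. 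By the thinness of Bruhat order (every rank-$2$ Bruhat interval has exactly four elements; see Bj\"orner--Brenti), the interval $[z,x]$ consists of four elements $\{z, y, y', x\}$ with $z \lessdot y, y' \lessdot x$. Since cover relations are edges of $\Gamma$, this is a diamond in the paper's sense. All four vertices lie in $[u,v]$ (as $u \leq z$ and $y, y' \leq x \leq v$), and the three vertices $z, y, y'$ lie in $I$ because $I$ is an order ideal containing $x$. Since $\ell(u,z), \ell(u,y), \ell(u,y') < \ell(u,x)$, the inductive hypothesis gives $\{z,y,y'\} \subset D$. Diamond-closedness of $D$ in $[u,v]$ then yields $x \in D$, completing the induction.

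The only nontrivial ingredient is the thinness of Bruhat order, which is a classical fact; everything else is elementary bookkeeping with order ideals. I do not anticipate a serious obstacle, but the one point that requires care is verifying that the bottom element $z$ of the rank-$2$ subinterval actually lies in $[u,v]$ (not merely in $W$) — this is why it is important to select $z$ along a chain inside $[u,x]$ rather than by going down two covers from $x$ in $W$, since the latter could leave the interval.
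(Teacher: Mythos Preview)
Your proof is correct and follows essentially the same approach as the paper's: both establish $D\subset I$ trivially and then obtain the reverse inclusion by finding, for any $x\in I$ with $\ell(u,x)\geq 2$, a rank-two Bruhat subinterval $[z,x]\subset [u,v]$, invoking thinness to see it is a diamond, and using diamond-closedness of $D$ together with the fact that the three lower vertices already lie in $D$. The paper phrases this as a minimal-counterexample argument rather than an explicit induction on $\ell(u,x)$, but the content is identical.
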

\begin{proof}
Let $D\coloneqq \dc_{[u,v]}(\{u\} \cup A_I)$; since $I$ is diamond-closed and contains $\{u\} \cup A_I$, we have $D \subset I$. Suppose they are unequal, and let $y$ be a minimal element of $I \setminus D$. We have $\ell(u,y) \geq 2$, since $I$ and $D$ both have atoms $A_I$. Thus we may choose $w \in [u,v]$ such that $w \leq y$ and $\ell(w,y)=2$. Any height-two interval in Bruhat order is a diamond, so $[w,y]=\{w,x,x',y\}$ with $w \lessdot x,x' \lessdot y$. Since $I$ is an order ideal, and by the minimality of $y$, we know $w,x,x' \in I \cap D$. But this contradicts the diamond-closedness of $D$, thus $I=D$.
\end{proof}

\begin{lemma}
\label{lem:reflection-subgroup-is-dc}
Let $W' \subset S_n$ be a reflection subgroup and $u \in S_n$, then $W'u$ is diamond-closed in $S_n$. 
\end{lemma}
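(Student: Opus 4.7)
The plan is to show directly that if $W'u$ contains three vertices of a diamond in $\Gamma$, it must contain the fourth. I would begin by recalling that two elements $x,y$ of $S_n$ lie in the same right coset of $W'$ precisely when $xy^{-1}\in W'$. Label the diamond $\{x_1,x_2,x_3,x_4\}$ so that $x_1\xrightarrow{t_1} x_2\xrightarrow{t_2} x_4$ and $x_1\xrightarrow{t_1'} x_3\xrightarrow{t_2'} x_4$, matching the picture preceding \Cref{thm:diamondfliporder}. The four edge labels are then given explicitly by $t_1 = x_2x_1^{-1}$, $t_2 = x_4x_2^{-1}$, $t_1'=x_3x_1^{-1}$, $t_2'=x_4x_3^{-1}$, and they satisfy $t_2t_1 = t_2't_1' = x_4x_1^{-1}$.

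The key ingredient is again \cite[Lemma 3.1]{Dyer-bruhat-graph} (already invoked in \Cref{thm:diamondfliporder}): the reflection subgroup $\langle t_1,t_2,t_1',t_2'\rangle$ is dihedral. Since any two distinct reflections in a dihedral reflection group generate the whole group, it will suffice in each case to exhibit two \emph{distinct} elements of $\{t_1,t_2,t_1',t_2'\}$ that already lie in $W'$; all four will then lie in $W'$, and the missing vertex will be obtainable from a known one in $W'u$ by left multiplication by one of these reflections.

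Next I would run through the three cases according to which vertex is absent. If $x_4$ is missing, then $t_1 = x_2x_1^{-1}$ and $t_1' = x_3x_1^{-1}$ are in $W'$, and they are distinct because $x_2\neq x_3$. If $x_1$ is missing, then $t_2 = x_4x_2^{-1}$ and $t_2' = x_4x_3^{-1}$ are in $W'$, again distinct for the same reason. If a middle vertex, say $x_3$, is missing, then $t_1 = x_2x_1^{-1}$ and $t_2 = x_4x_2^{-1}$ are in $W'$; these are distinct because $t_1=t_2$ would yield $x_1=t_1x_2=t_2x_2=x_4$, contradicting the distinctness of the diamond's vertices. In each case, all four edge reflections land in $W'$, so the missing vertex lies in $W'u$.

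The proof is short once the dihedral generation observation is in hand, so there is no serious obstacle. The only point requiring mild care is the distinctness check in the middle-vertex case; the rest is essentially bookkeeping.
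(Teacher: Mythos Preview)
Your argument is correct and follows essentially the same route as the paper: invoke \cite[Lemma 3.1]{Dyer-bruhat-graph} to see that the four edge reflections generate a dihedral subgroup, then use that in $S_n$ any two distinct reflections in such a subgroup (necessarily $S_2\times S_2$ or $S_3$) generate it. The only difference is presentational: the paper avoids your three-way case split by choosing the base point to be the vertex in $W'u$ adjacent to the other two known vertices, which handles all configurations uniformly; you might also make explicit that the ``two distinct reflections generate'' claim uses the $S_n$-specific classification of dihedral reflection subgroups, since it fails in general dihedral groups of larger order.
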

\begin{proof}
Let $xu,t_1xu,t_2xu \in W'u$ be three elements of a diamond with fourth element $y=t_3t_1xu=t_4t_2xu$, where $t_1,t_2,t_3,t_4 \in T$. Then, by \cite[Lemma 3.1]{Dyer-bruhat-graph}, $W''=\langle t_1,t_2,t_3,t_4 \rangle$ is a dihedral group. Any dihedral reflection subgroup of $S_n$ is isomorphic to $S_2 \times S_2$ or to $S_3$, in either case, the distinct reflections $t_1,t_2$ generate $W''$. Since we have $t_1,t_2 \in W'$, this implies $t_3,t_4 \in W'$, so $y \in W'u$ and $W'u$ is diamond-closed.
\end{proof}

\begin{proposition}\label{thm:dc_simple_is_parabolic}
Let $I$ be a diamond-closed order ideal in the simple interval $[u,v]$, then $I=[u,v] \cap W'u$ for the reflection subgroup $W' \subset S_n$ generated by $T_I$.
\end{proposition}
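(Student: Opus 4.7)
The plan is to prove the two inclusions $I \subset [u,v] \cap W'u$ and $[u,v] \cap W'u \subset I$ separately, with the reverse inclusion requiring most of the work. For $I \subset [u,v] \cap W'u$, I would combine \Cref{lem:dc-ideal-is-dc-of-atoms}, which gives $I = \dc_{[u,v]}(\{u\} \cup A_I)$, with \Cref{lem:reflection-subgroup-is-dc}, which says $W'u$ is diamond-closed in $S_n$ and hence $[u,v] \cap W'u$ is diamond-closed in $[u,v]$. Since $T_I \subset W'$, the set $[u,v] \cap W'u$ clearly contains $\{u\} \cup A_I$, and so it contains the diamond closure $I$.

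For the reverse inclusion, simplicity enters essentially through the Coxeter structure on $W'$. Linear independence of $\Phi_{uv}$ (hence of $\Phi_I \subset \Phi_{uv}$) ensures, by Dyer's theory of reflection subgroups, that $(W', T_I)$ is a Coxeter system with $T_I$ as its canonical simple reflections. Each $t \in T_I$ satisfies $\ell(tu) > \ell(u)$ because $tu \in A_I \subset [u,v]$, so $u$ is the minimum-length coset representative of $W'u$. Standard coset theory then gives a length-additive poset isomorphism between the $W'$-Bruhat order on $W'$ and the $S_n$-Bruhat order restricted to $W'u$, sending $w \mapsto wu$.

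With these tools in hand, I would prove $[u,v] \cap W'u \subset I$ by induction on $d = \ell(u, y)$ for $y \in [u,v] \cap W'u$. For $d = 0$, $y = u \in I$. For $d = 1$, $y = tu$ is an atom with $t \in W' \cap T_{uv}$; since every root of $W'$ lies in $\spn(\Phi_I)$ while $\alpha_t \in \Phi_{uv}$, the linear independence of $\Phi_{uv}$ together with $\Phi_I \subset \Phi_{uv}$ forces $\alpha_t \in \Phi_I$, giving $y \in A_I \subset I$. For $d \geq 2$, write $y = w_0 u$ with $\ell_{W'}(w_0) = d$. Any Coxeter group element of length $\geq 2$ has at least two distinct Bruhat coatoms (combining the existence of a descent with the diamond property), so pick two distinct coatoms $w_1, w_1'$ of $w_0$ in $W'$-Bruhat and let $w_2$ be their unique common down-cover. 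Translating by $u$ yields a $4$-element diamond $\{w, z_1, z_2, y\}$ inside $[u,v]$, with $z_1, z_2 \in [u,v] \cap W'u$ at rank $d-1$. Induction gives $z_1, z_2 \in I$; the order-ideal property of $I$ gives $w \in I$ (since $w < z_1$); and diamond-closedness of $I$ then forces $y \in I$, completing the induction.

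The main obstacle is carefully invoking Dyer's theory to pass between $W'$-Bruhat order and $S_n$-Bruhat order on $W'u$, and to identify $T_I$ as the canonical simple system of $W'$; once this Coxeter-theoretic scaffolding is in place, the induction is natural and mirrors the diamond-closure argument behind \Cref{lem:dc-ideal-is-dc-of-atoms}.
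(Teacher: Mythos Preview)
Your forward inclusion $I\subset [u,v]\cap W'u$ is fine and matches the paper's use of Lemmas~\ref{lem:dc-ideal-is-dc-of-atoms} and~\ref{lem:reflection-subgroup-is-dc}. The reverse inclusion, however, rests on Coxeter-theoretic claims about $W'$ that are not true.

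The assertion that $T_I$ is Dyer's canonical simple system for $W'=\langle T_I\rangle$ fails: linear independence of $\Phi_I$ does not rule out, e.g., $T_I=\{(12),(13)\}$, which generates $S_{\{1,2,3\}}$ whose canonical generators are $\{(12),(23)\}$. Concretely, take $u=13245$, $v=32154$ in $S_5$. Then $[u,v]$ is simple with $T_{uv}=\{(12),(13),(45)\}$, and $I=[13245,32145]$ is a proper diamond-closed order ideal with $T_I=\{(12),(13)\}$. Here $(23)u=12345<u$, so $u$ is \emph{not} the minimal element of $W'u$, and the map $w\mapsto wu$ is not an order isomorphism from $W'$-Bruhat to $S_n$-Bruhat on $W'u$ (nor is it length-additive). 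Running your induction at $y=32145$: writing $y=w_0u$ with $w_0=312\in W'$, one $W'$-coatom of $w_0$ is $(23)=132$, and $(23)u=12345\notin [u,v]$. Thus the diamond you construct below $y$ does not lie in $[u,v]$, and the inductive step collapses.

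The paper avoids this entirely by a polytopal argument. It shows that $[u,v]\cap W'u$ is the vertex set of a face of the Bruhat interval polytope of $[u,v]$: simplicity makes the cone on $\Phi_{uv}$ at $\bs{u}$ simplicial, so the subspace $\bs{u}+\spn_{\mathbb{R}}(\Phi_I)$ cuts out a face. Then the Tsukerman--Williams theorem that faces of Bruhat interval polytopes are again Bruhat interval polytopes forces $[u,v]\cap W'u=[u,z]$ for some $z$, hence an order ideal. Since it is diamond-closed with atom set $A_I$ (the latter by linear independence of $\Phi_{uv}$), Lemma~\ref{lem:dc-ideal-is-dc-of-atoms} gives $I=[u,v]\cap W'u$.
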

\begin{proof}
Let $I'=[u,v] \cap W'u$, where $W'=\langle T_I \rangle$ is the reflection subgroup generated by $T_I$; we will show that $I=I'$. 

First, note that reflections $t_1,\ldots,t_k \in S_n$ are algebraically independent\footnote{meaning that $t_i$ is not in the subgroup generated by the other reflections, for all $i$} if the corresponding roots $\alpha_{t_1},\ldots,\alpha_{t_k}$ are linearly independent. Since $[u,v]$ is simple, we conclude that $A_{I'}=A_I$. By Lemma~\ref{lem:reflection-subgroup-is-dc} $W'u$ is diamond-closed, so $I'$ is diamond-closed in $[u,v]$. We now want to show that $I'$ is an order ideal, which will imply $I=I'$ by Lemma~\ref{lem:dc-ideal-is-dc-of-atoms}.

Let $Q$ be the convex polytope in $\mathbb{R}^n$ with vertices $\bs{x}=(x^{-1}(1),\ldots,x^{-1}(n))$ for $x \in [u,v]$; this is a \emph{Bruhat interval polytope} in the sense of \cite{Kodama-Williams, Tsukerman-Williams}. For $a=tu \in A_{uv}$, there is an edge of $Q$ between $\bs{u}$ and $\bs{a}$ parallel to $\alpha_t$. Consider the cone $C$ with base point $\bs{u}$ and rays $\Phi_{uv}$, clearly $Q \subset C$. Since $\Phi_{uv}$ is linearly independent, every subset of the rays spans a face, thus $F=C \cap (\bs{u}+V_I)$ is a face of $C$, where $V_I=\spn_{\mathbb{R}}(\Phi_I)$, and $F \cap Q$ is a face of $Q$.

Now, if $\bs{x}$ is a vertex of $F \cap Q$, then $x \in I'$, since there is a walk from $\bs{u}$ to $\bs{x}$ along edges of $F \cap Q$, each of which is parallel to a root lying in $V_I$. This gives rise to an expression $x=t_k \cdots t_1 u$ with each $t_i \in W'$. Conversely, if $x \in I'$, then $\bs{x} \in \bs{u}+V_I$, so $\bs{x}$ is a vertex of $F \cap Q$. Thus $\{\bs{x} \mid x \in I'\}$ is the vertex set of the face $F \cap Q$. By (the proof of) \cite[Theorem 4.1]{Tsukerman-Williams}, any face of a Bruhat interval polytope is the Bruhat interval polytope of a subinterval, so we must have $I'=[u,z]$ for some $z \in [u,v]$. In particular, $I'$ is an order ideal in $[u,v]$.
\end{proof}

The following is a useful lemma for constructing reflection orders.

\begin{lemma}\label{lem:constructreflorder}
    Fix $1\leq i \leq k$. If $t_1,t_2,\ldots,t_k$ are reflections such that the roots $\alpha_{t_1}, \alpha_{t_2},\ldots, \alpha_{t_k}$ are linearly independent, then there is a reflection order on $T$ satisfying the following properties:
    \begin{enumerate}[(i)]
        \item \(\displaystyle t_1 \prec t_2 \prec \cdots \prec t_k. \)
        \item The set of reflections $t$ such that $\alpha_t$ is in  $\spn_{\mathbb{R}}(\alpha_{t_1},\ldots, \alpha_{t_i})$ is an interval under $\prec$.
        \item If $t,t'$ are reflections such that $\alpha_t\in \spn_{\mathbb{R}}(\alpha_{t_1},\ldots, \alpha_{t_i})$ and \[\alpha_{t'}\in \spn_{\mathbb{R}}(\alpha_{t_1},\ldots, \alpha_{t_k})\setminus\spn_{\mathbb{R}}(\alpha_{t_1},\ldots, \alpha_{t_i}),\] then $t \prec t'$ if the coefficients of $\alpha_{t_{i+1}},\ldots,\alpha_{t_k}$ in $\alpha_{t'}$ are each non-negative.
    \end{enumerate} 
\end{lemma}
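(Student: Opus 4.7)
The plan is to construct the desired reflection order via the standard bijection between reflection orders on $S_n$ and reduced expressions for the longest element $w_0 \in S_n$, by building a reduced expression with a prescribed block structure. Let $V_1 = \spn_{\mathbb{R}}(\alpha_{t_1},\ldots,\alpha_{t_i})$ and $V_2 = \spn_{\mathbb{R}}(\alpha_{t_1},\ldots,\alpha_{t_k})$, and let $W_1 \subseteq W_2 \subseteq S_n$ be the reflection subgroups whose positive root systems are $\Phi^+ \cap V_1$ and $\Phi^+ \cap V_2$. By the paper's observation that every reflection subgroup of $S_n$ is a product of symmetric groups $S_K$, both $W_j$ are Young subgroups, with longest elements $w_0^{W_j}$ whose inversion sets are exactly $\Phi^+(W_j)$.

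The construction then proceeds as follows. Choose a reduced expression for $w_0 \in S_n$ that begins with a reduced expression for $w_0^{W_1}$; this is possible since $w_0^{W_1} \le w_0$ in right weak order. The first $|\Phi^+(W_1)|$ reflections of the induced reflection order are then exactly the elements of $\Phi^+(W_1)$, forming an initial segment and hence an interval, establishing (ii). Condition (iii), read literally, is automatic since every $V_1$-reflection precedes every non-$V_1$ reflection; under the alternative reading (sign condition on $\alpha_{t'}$), we further refine by placing the ``Positive'' subset of $\Phi^+(W_2) \setminus \Phi^+(W_1)$ immediately after the $V_1$-block, using that this subset is cut out by a half-space in $V_2$ and hence is biclosed in Dyer's sense, so equals the inversion set of a unique element of $W_2$ that can be inserted into the reduced expression. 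Within the $V_1$-block the induced order is a reflection order on $W_1$; applying the lemma recursively to $W_1$ with the reflections $t_1,\ldots,t_i$, whose roots are linearly independent in $V_1$, yields $t_1 \prec \cdots \prec t_i$. An analogous recursive construction in the continuation after the $V_1$-block arranges $t_{i+1} \prec \cdots \prec t_k$, completing (i).

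The main obstacle is justifying the recursive use of the lemma, since it is stated for $S_n$ but $W_1$ is typically a product $S_{K_1} \times \cdots \times S_{K_r}$. The key point is that the argument above is entirely intrinsic to the Coxeter group structure of $W_1$ (using its Young subgroups, longest elements, and reduced expressions), so it applies without change to any finite Coxeter group that is a product of symmetric groups, and in particular to $W_1$ itself; the recursion is well-founded because each call strictly decreases $\dim V_j$. Once the recursive hypothesis is in place, conditions (i), (ii), and (iii) follow from the block structure of the chosen reduced expression by direct inspection.
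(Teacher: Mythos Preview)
Your approach has a genuine gap. You assert that the inversion set of $w_0^{W_1}$ in $S_n$ equals $\Phi^+(W_1)$, so that placing a reduced word for $w_0^{W_1}$ at the front of a reduced word for $w_0$ makes $\Phi^+(W_1)$ an \emph{initial segment} of the resulting reflection order. This holds only when $W_1$ is a \emph{standard parabolic} subgroup of $S_n$, i.e.\ each factor $S_K$ has $K$ a block of consecutive integers. The reflection subgroup generated by arbitrary $t_1,\ldots,t_i$ is in general only conjugate to such a subgroup; then $\ell_{S_n}(w_0^{W_1}) > |\Phi^+(W_1)|$, and the initial segment you obtain is the full $S_n$-inversion set of $w_0^{W_1}$, which strictly contains $\Phi^+(W_1)$.

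For a concrete failure take $n=3$, $k=i=1$, $t_1=(1\,3)$. Then $W_1=\langle(1\,3)\rangle$, $\Phi^+(W_1)=\{e_1-e_3\}$, and $w_0^{W_1}=(1\,3)=s_1s_2s_1$ has $S_3$-inversion set $\{e_1-e_2,\,e_1-e_3,\,e_2-e_3\}$. In fact no reflection order on $S_3$ has $\{(1\,3)\}$ as an initial segment: since $e_1-e_3=(e_1-e_2)+(e_2-e_3)$, one of $(1\,2),(2\,3)$ must precede $(1\,3)$ in any reflection order. The singleton $\{(1\,3)\}$ \emph{is} an interval in both reflection orders on $S_3$, but your reduced-word construction cannot realize it as such. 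The same obstruction breaks the recursive step and the ``biclosed half-space'' argument for (iii), since the sets you need are not inversion sets of elements of $S_n$.

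The paper avoids this by using the linear-functional description of reflection orders: order $T$ by the ratio $F_1(\alpha_t)/F_2(\alpha_t)$ for suitable $F_1,F_2\in(\mathbb{R}^n)^*$ with $F_2>0$ on $\Phi^+$. Choosing $F_1(\alpha_{t_j})$ positive for $j>i$ and in a small neighborhood of $0$ for $j\le i$ makes the $V_1$-reflections cluster near ratio $0$, hence form an \emph{interval} (typically not initial), while any $t'$ whose root has nonnegative $\alpha_{t_{i+1}},\ldots,\alpha_{t_k}$-coefficients automatically gets a larger ratio. The freedom within that neighborhood then secures (i). This construction never requires $W_1$ to be parabolic.
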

\begin{proof}
    \newcommand{\RR}{\mathbb{R}}
    Note that if $F_1,F_2:\RR^n\to \RR$ are any linear maps such that $F_2(\alpha_t)>0$ for all $t\in T$ and $t\mapsto \frac{F_1(\alpha_t)}{F_2(\alpha_t)}$ is injective, then ordering the reflections by
    \[ t_1\prec t_2 \text{ if and only if } \frac{F_1(\alpha_{t_1})}{F_2(\alpha_{t_1})} < \frac{F_1(\alpha_{t_2})}{F_2(\alpha_{t_2})} \]
    gives a reflection order.

    Choose positive roots $\alpha_{t_{k+1}},\ldots,\alpha_{t_{n-1}}$ so that $\{\alpha_{t_1}, \ldots, \alpha_{t_{n-1}}\}$ is a basis for the span of the roots of $S_n$. Now fix a choice of $F_2$ and positive values of $F_1(\alpha_{t_{j}})$ for $j>i$ so that
    \[ \frac{F_1(\alpha_{t_{i+1}})}{F_2(\alpha_{t_{i+1}})} < \ldots < \frac{F_1(\alpha_{t_k})}{F_2(\alpha_{t_k})}. \]
    By taking the values of $F_1(\alpha_{t_1}),\ldots,F_1(\alpha_{t_i})$ to be in a small enough neighborhood of zero, we can guarantee (ii); then (iii) is automatic. We can adjust the relative ordering of the values within the neighborhood of $0$ so that
    \[  \frac{F_1(\alpha_{t_{1}})}{F_2(\alpha_{t_{1}})} < \ldots < \frac{F_1(\alpha_{t_i})}{F_2(\alpha_{t_i})}. \]
    Hence we get (i).
\end{proof}

\begin{corollary}\label{thm:simpleRH}
    If $[u,v]$ is simple and $I$ is any strong hypercube decomposition, then $\tR_{u,v}(q) = \tH_{u,v,I}(q)$.
\end{corollary}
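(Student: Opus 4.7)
By \Cref{thm:RHinequality}(iii), it suffices to construct a reflection order $\prec$ on $T$ satisfying property (E) for the pair $(I,[u,v])$. Since $[u,v]$ is simple and $I$ is diamond-closed (by (HD2)), \Cref{thm:dc_simple_is_parabolic} identifies $I = [u,v] \cap W'u$ with $W' = \langle T_I \rangle$. Consequently any edge inside $I$ has label $t \in W'$, i.e.\ with $\alpha_t \in V_I \coloneqq \spn(\Phi_I)$, while any edge from $I$ to $[u,v]\setminus I$ has label outside $W'$. So the goal becomes producing a reflection order in which every reflection with root in $V_I$ precedes every reflection $t'$ that labels an edge leaving $I$.

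The construction uses \Cref{lem:constructreflorder} applied to the atoms of $[u,v]$, listed as $t_1,\ldots,t_i \in T_I$ followed by $t_{i+1},\ldots,t_k \in T_{uv}\setminus T_I$. Simplicity of $[u,v]$ gives linear independence of $\alpha_{t_1},\ldots,\alpha_{t_k}$, so the lemma yields a reflection order whose clause (iii) guarantees $t \prec t'$ whenever $\alpha_t \in V_I$ and the coefficients of $\alpha_{t'}$ on $\alpha_{t_{i+1}},\ldots,\alpha_{t_k}$ are all nonnegative. The crux is thus to check this nonnegativity condition for every $t'$ labeling an edge $x \to y$ with $x \in I$ and $y \in [u,v]\setminus I$.

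This will follow from the Bruhat interval polytope picture in the proof of \Cref{thm:dc_simple_is_parabolic}. Let $Q$ be that polytope and $C = \bs{u} + \spn_{\geq 0}(\alpha_{t_1},\ldots,\alpha_{t_k})$ the simplicial cone containing it. By $Q \subset C$ and linear independence, every $\bs{w} - \bs{u}$ for $w \in [u,v]$ admits a unique expansion $\sum_j c_j(w)\alpha_{t_j}$ with $c_j(w) \geq 0$. Since $x \in I$ forces $\bs{x} - \bs{u} \in V_I$, we get $c_j(x) = 0$ for $j > i$. The polytope edge $\bs{x}\to\bs{y}$ is a positive multiple $\lambda\alpha_{t'}$, so for $j > i$ the coefficient $c_j(t') = c_j(y)/\lambda$ is nonnegative, as required. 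This yields (E), whence \Cref{thm:RHinequality}(iii) delivers $\tR_{u,v} = \tH_{u,v,I}$. I expect this positivity verification to be the main obstacle; once it is in place, the argument is a direct composition of \Cref{thm:RHinequality}, \Cref{thm:dc_simple_is_parabolic}, and \Cref{lem:constructreflorder}.
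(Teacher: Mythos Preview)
Your argument is correct and follows essentially the same route as the paper: apply \Cref{lem:constructreflorder} to the linearly independent atomic reflections with $T_I$ listed first, use \Cref{thm:dc_simple_is_parabolic} to see that edge labels inside $I$ lie in $V_I$, and verify via the containment $Q\subset C$ that edges leaving $I$ have roots with nonnegative $\Phi_{uv}\setminus\Phi_I$-coefficients, so that clause (iii) of the lemma yields (E). The paper states the nonnegativity step in a single parenthetical (``this is implied by the fact that $Q\subset C$''); your expansion of that step, observing that $\bs{y}-\bs{x}$ is a \emph{positive} multiple of $\alpha_{t'}$ and hence inherits the nonnegative coefficients of $\bs{y}-\bs{u}$, is exactly the intended justification.
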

\begin{proof}
    Write $T_I = \{t_1,\ldots,t_i\}$ and $T_{uv} = \{t_1,\ldots,t_i,\ldots,t_k\}$. Let $\prec$ be the reflection order constructed by \Cref{lem:constructreflorder}. Using
    \Cref{thm:dc_simple_is_parabolic}, the edge labels of edges in $I$ are exactly the edge labels of $\{x\to y\mid x\in I,~y\in[u,v]\}$ which are in the span of $\Phi_I$. Furthermore, every edge in $\{x\to y\mid x\in I,~y\in[u,v]\}$ has a label $t$ such that $\alpha_t$ has non-negative coefficients on each root in $\Phi_{uv}\setminus \Phi_I$ (this is implied by the fact that $Q\subset C$, using the notation from the proof of the Proposition). Hence, the condition (E) is satisfied by $\prec$. So by \Cref{thm:RHinequality}, $\tR_{u,v}(q)=\tH_{u,v,I}(q)$.
\end{proof}

\section{Standard hypercube decompositions}\label{sec:standard}

The main result of this section is the following:
\begin{theorem}\label{thm:existsequality}
    In every Bruhat interval $[u,v]$ with $u<v$ there is an $I_{\mathrm{st}}\subsetneq [u,v]$ which gives a strong hypercube decomposition of $[u,v]$ and such that $\tR_{u,v}(q) = \tH_{u,v,I_{\mathrm{st}}}(q)$.
\end{theorem}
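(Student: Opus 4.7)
The plan is to construct an explicit Bruhat subinterval $I_{\mathrm{st}} = [u,z] \subsetneq [u,v]$, verify the strong hypercube decomposition axioms (HD1)--(HD3), and then apply \Cref{thm:RHinequality}(iii) by producing a reflection order satisfying condition (E). The argument would proceed by induction on $\ell(v) - \ell(u)$; the base case $\ell(v) - \ell(u) = 1$ is trivial with $I_{\mathrm{st}} = \{u\}$.

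For the inductive step, I would fix a simple reflection $s$ with $vs < v$ and, after passing to the inverse interval $[v^{-1}, u^{-1}] \cong [u,v]$ if necessary to ensure $us > u$, use the lifting property to obtain an atom $us$ of $[u,v]$. I would take $I_{\mathrm{st}}$ to be the smallest Bruhat subinterval containing $\{u, us\}$ that is diamond-closed and carries strong hypercube clusters at every element; concretely, one defines $I_{\mathrm{st}} = [u,v] \cap W' u$ for an appropriate reflection subgroup $W'$ built from the reflections governing edges into $us$. By \Cref{lem:reflection-subgroup-is-dc}, $W'u$ is diamond-closed, giving (HD2); and by a Bruhat-interval-polytope argument analogous to that of \Cref{thm:dc_simple_is_parabolic}, $I_{\mathrm{st}}$ has the form $[u,z]$, giving (HD1).

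The verification of (HD3) is the technical heart. For each $x \in I_{\mathrm{st}}$, the hypercube map $\theta_x:\cA_x \to [u,v]$ is forced on singletons by (HC2) and then on larger antichains by (HC4) combined with the uniqueness provided by \Cref{thm:bottomedges}. Consistency comes down to the claim that whenever two antichains $Y, Y' \in \cA_x$ with $|Y|=|Y'|=|Y\cap Y'|+1$ have images forming the bottom of a diamond in $\Gamma(u,v)$, the completing top vertex is uniquely determined by the bottom three. This is controlled by Dyer's dihedral reflection subgroup lemma (already used in \Cref{thm:diamondfliporder}) together with the lifting property at $s$, which pins down how right-multiplication by $s$ interacts with diamonds in $\Gamma$.

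Finally, to prove $\tR_{u,v} = \tH_{u,v,I_{\mathrm{st}}}$, I would apply \Cref{thm:RHinequality}(iii) and construct a reflection order $\prec$ satisfying (E) via \Cref{lem:constructreflorder}: take $t_1,\ldots,t_i$ to be a linearly independent set of reflections labelling edges inside $I_{\mathrm{st}}$, extend to $t_1,\ldots,t_k$ spanning $\Phi_{uv}$, and use the resulting reflection order in which inside-labels precede outside-labels. The positivity hypothesis of \Cref{lem:constructreflorder}(iii) is verified via the Bruhat interval polytope argument, analogously to the proof of \Cref{thm:simpleRH}. The main obstacle I anticipate is the global consistency of (HC4): the construction of $I_{\mathrm{st}}$ must be large enough that every antichain has a unique completing top, yet small enough that $I_{\mathrm{st}} \subsetneq [u,v]$. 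Striking this balance requires a careful choice of the defining reflection subgroup $W'$, and is the main subtlety of the section.
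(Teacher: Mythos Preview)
Your plan has genuine gaps, and the paper's route is rather different from what you outline.

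\textbf{The construction of $I_{\mathrm{st}}$.} You leave $W'$ essentially unspecified (``built from the reflections governing edges into $us$''), and the interim description ``smallest Bruhat subinterval \ldots\ that carries strong hypercube clusters'' is circular, since whether a hypercube cluster exists at $x$ depends on $I$ itself. The paper instead takes a completely explicit $I_{\mathrm{st}}$: letting $d$ be the smallest value with $u^{-1}(d)\neq v^{-1}(d)$, set $I_{\mathrm{st}}=\{x\in[u,v]\mid x^{-1}(d)=u^{-1}(d)\}$. This is $[u,v]\cap W'u$ for the maximal parabolic $W'$ fixing $d$, but the point is that with this concrete choice one can reduce to $d=1$ and cite \cite[Theorems~5.1 and 5.4]{Blundell} for (HD1), (HD2), and an explicit cycle formula $\theta_x(\{i_1<\cdots<i_k\})=x\cdot(u^{-1}(1),i_1,\ldots,i_k)$. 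The verification of (HC4) is then a direct case analysis on how the transpositions $(j,i)$ and $(j',i')$ interact; appealing only to the dihedral lemma and the lifting property, as you suggest, does not obviously pin down the top vertex.

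\textbf{Misuse of the simple-interval arguments.} You twice invoke arguments that \emph{require} simplicity of $[u,v]$. First, the polytope proof of \Cref{thm:dc_simple_is_parabolic} shows $[u,v]\cap W'u$ is an interval by using that the cone on $\Phi_{uv}$ is simplicial; without linear independence of the atom roots this step fails, so you cannot get (HD1) that way for a general interval. Second, your construction of a reflection order satisfying (E) mimics \Cref{thm:simpleRH}, feeding $\Phi_{uv}$ into \Cref{lem:constructreflorder}; again this needs $\Phi_{uv}$ linearly independent. The paper sidesteps both issues: (HD1)--(HD2) come from \cite{Blundell}, and the reflection order for (E) is built from the \emph{simple} roots of $S_n$ (always independent), split according to whether they involve the index $d$, using the fact that an edge leaves $I_{\mathrm{st}}$ exactly when its root has a nonzero $e_d$-component.

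Finally, no induction on $\ell(u,v)$ is needed for this theorem; the construction is direct.
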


\begin{corollary}\label{thm:elementaryRH} 
    Let $[u,v]$ and $[u',v']$ be isomorphic Bruhat intervals in $S_n$. If $[u',v']$ is simple, then $\tR_{u,v}(q)=\tR_{u',v'}(q)$. Furthermore, for every strong hypercube decomposition $I$ of $[u,v]$ we have $\tR_{u,v}(q)=\tH_{u,v,I}(q)$.
\end{corollary}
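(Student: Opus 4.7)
The plan is to prove the two assertions by strong induction on $\ell(u,v)$, with the base case $u=v$ trivial since $\tR_{u,u}=1$. The first assertion (that $\tR_{u,v}=\tR_{u',v'}$) will be proved first and then used to derive the ``furthermore'' claim.

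For the inductive step of the first assertion, I would invoke \Cref{thm:existsequality} to obtain a proper strong hypercube decomposition $I_{\mathrm{st}} \subsetneq [u,v]$ with $\tR_{u,v}=\tH_{u,v,I_{\mathrm{st}}}$. Letting $\phi:[u,v]\to[u',v']$ be the given isomorphism and $I' = \phi(I_{\mathrm{st}})$, by \Cref{rem:isomH} the set $I'$ is a strong hypercube decomposition of $[u',v']$. A key preliminary observation is that simplicity is inherited by lower sub-intervals: for any $y\in[u',v']$, the roots corresponding to covers $u'\lessdot(ij)u'\leq y$ form a subset of those for $[u',v']$, so they remain linearly independent, and $[u',y]$ is simple. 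Thus for every $x\in I_{\mathrm{st}}$, since $x\neq v$ we have $\ell(u,x)<\ell(u,v)$, and the inductive hypothesis applied to $[u,x]\cong[u',\phi(x)]$ (with $[u',\phi(x)]$ simple) yields $\tR_{u,x}=\tR_{u',\phi(x)}$. Feeding this pointwise agreement into the explicit formula in \Cref{rem:isomH} upgrades it to the polynomial equality $\tH_{u,v,I_{\mathrm{st}}}=\tH_{u',v',I'}$. Since $[u',v']$ is simple, \Cref{thm:simpleRH} gives $\tR_{u',v'}=\tH_{u',v',I'}$. Chaining the three equalities yields $\tR_{u,v}=\tR_{u',v'}$, closing the induction.

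For the ``furthermore'' claim, I would run the same argument starting from an arbitrary strong hypercube decomposition $I$ of $[u,v]$: set $I'=\phi(I)$, invoke the first assertion (already proven, not inductively) on each pair $[u,x]\cong[u',\phi(x)]$ with $x\in I$ to obtain $\tR_{u,x}=\tR_{u',\phi(x)}$, and then apply \Cref{rem:isomH} to conclude $\tH_{u,v,I}=\tH_{u',v',I'}$. Combined with $\tR_{u',v'}=\tH_{u',v',I'}$ from \Cref{thm:simpleRH} and $\tR_{u,v}=\tR_{u',v'}$ from the first assertion, this gives $\tR_{u,v}=\tH_{u,v,I}$.

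The main point to get right is the structure of the induction, which has to avoid circularity: the equality $\tR_{u,x}=\tR_{u',\phi(x)}$ in the first assertion is invoked only on intervals strictly shorter than $[u,v]$, and this strictness is guaranteed precisely by the \emph{proper} containment $I_{\mathrm{st}}\subsetneq[u,v]$ supplied by \Cref{thm:existsequality}; the ``furthermore'' assertion then depends only on the already-established first assertion. The conceptual content beyond bookkeeping is the hereditary simplicity observation, which is what lets the inductive hypothesis apply to every $[u',\phi(x)]$ without additional work.
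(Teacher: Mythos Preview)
Your proposal is correct and follows essentially the same approach as the paper: induction on $\ell(u,v)$, hereditary simplicity of lower subintervals, transferring the hypercube decomposition through the isomorphism via \Cref{rem:isomH}, and combining \Cref{thm:simpleRH} with \Cref{thm:existsequality}. The only difference is organizational: the paper first shows $\tH_{u,v,I}=\tR_{u',v'}$ for every proper $I$ and then specializes to $I_{\mathrm{st}}$, whereas you first establish $\tR_{u,v}=\tR_{u',v'}$ via $I_{\mathrm{st}}$ and then treat arbitrary $I$ separately, but the content is identical.
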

\begin{proof}
    Let $\phi:[u,v]\to [u',v']$ be an isomorphism of posets and $I$ a strong hypercube decomposition of $[u,v]$. If $I$ is a \emph{proper} subset of $[u,v]$ then we know that $\tR_{u,x}(q)=\tR_{u',\phi(x)}(q)$ for all $x\in I$ by induction on $\ell(u,v)$. (Here we are using the fact that lower subintervals of the simple interval $[u',v']$ are simple.) Hence by following \Cref{rem:isomH}, we can show $\tH_{u,v,I}(q)=\tH_{u',v',\phi(I)}$. Furthermore, by \Cref{thm:simpleRH}, we have $\tR_{u',v'}(q) = \tH_{u',v',\phi(I)}(q)$. Combining these equalities, we see that $\tH_{u,v,I} = \tR_{u',v'}(q)$ whenever $I\subsetneq [u,v]$. Now \Cref{thm:existsequality} implies that there is some strong hypercube decomposition $I_{\mathrm{st}}\subsetneq [u,v]$ such that $\tH_{u,v,I_{\mathrm{st}}}(q) = \tR_{u,v}(q)$. As a result, $\tR_{u,v}(q) = \tR_{u',v'}(q)$ and the claim follows by induction.
\end{proof}

\begin{proof}[Proof of \Cref{thm:existsequality}]
    We will show that the \emph{standard hypercube decomposition} introduced in \cite{Blundell} is a strong hypercube decomposition admitting a reflection order which satisfies property (E). Hence by \Cref{thm:RHinequality}, the $\tH$-polynomial of the hypercube decomposition coincides with the $\tR$-polynomial of the interval.

    Let $u<v$ be elements of $S_n$. Then there is some value $1\leq i \leq n$ such that $u^{-1}(i) \neq v^{-1}(i)$. Let $d$ denote the minimal such value of $i$. The standard hypercube decomposition is
    \[ I_{\mathrm{st}} \coloneqq \{ x \in [u,v] \mid u^{-1}(d) = x^{-1}(d)  \}. \]
    It follows from the characterization of Bruhat order by projection to parabolic quotients \cite{Deodhar} that $y^{-1}(i)=u^{-1}(i)=v^{-1}(i)$ for $i<d$ for all $y \in [u,v]$. This also makes it clear that $[u,v] \cong [u',v']$ where $u'$ is obtained from $u$ by deleting $1,2,\ldots,d-1$ from the one-line notation and standardizing the alphabet, and similarly for $v'$. This isomorphism sends $I_{\mathrm{st}}$ to the standard hypercube decomposition $I'_{\mathrm{st}}$ of $[u',v']$; furthermore, $d' \coloneqq \min \{ i \mid u'^{-1}(i) \neq v'^{-1}(i)\}=1$. Then \cite[Theorem 5.1]{Blundell} 
    implies that $I'_{\mathrm{st}}$ is a diamond-closed lower interval of $[u',v']$, so the same is true of $I_{\mathrm{st}} \subset [u,v]$. 
    
    Similarly, \cite[Theorems 5.1 and 5.4]{Blundell} imply that for each $x\in I_{\mathrm{st}}'$, the set $\cY_x$ can be identified with $\{i\mid u^{-1}(1)< i \leq n \text{ and } (i,1)\cdot x \leq v\}$ so that a subset $Y=\{i_1<i_2<\ldots<i_k\}\subset \cY_x$ is in $\cA_x$ if and only if $x(i_1)>x(i_2)>\ldots>x(i_k)$. Furthermore, with this identification, the function $\theta_x:\cA_x\to [u',v']$ given by
    \[ \theta_x : \{ i_1 < i_2 < \ldots < i_k\} \mapsto x\cdot (u^{-1}(1),i_1,i_2,\ldots,i_k) \]
    is the unique function satisfying (HC1-3). Hence if we can show that $\theta_x$ satisfies (HC4) for each $x$, then we will have shown that $I_{\mathrm{st}}'$ is a strong hypercube decomposition of $[u',v']$ and thus that $I_{\mathrm{st}}$ is a strong hypercube decomposition of $[u,v]$. Consider a subgraph of $\Gamma(u',v')$ of the form
    \[\begin{tikzcd}[cramped,sep=small] & w & \\ \theta_x(Y)\ar[ur]& &\theta_x(Y')\ar[ul] \\ &\theta_x(Y\cap Y')\ar[ul]\ar[ur]& \end{tikzcd}.\] 
    Write $Y\cap Y' = \{ i_1 < \ldots < i_k\}$ and let $j$ (resp. $j'$) denote the additional element in $Y$ (resp. $Y'$). Without loss of generality, $j<j'$. Then there exist $i,i'\in \{x^{-1}(1),i_1,\ldots,i_k\}$ such that the diagram above is equal to
    \[\begin{tikzcd}[sep=small] & w & \\ \theta_x(Y\cap Y')\cdot(j,i)\ar[ur]& &\theta_x(Y\cap Y')\cdot(j',i')\ar[ul] \\ & x\cdot(u^{-1}(1),i_1,\ldots,i_k)\ar[ul]\ar[ur]& \end{tikzcd}.\] 
    If $(j,i)$ and $(j',i')$ commute, then $i<i'$ and so 
$\{i_1,\ldots,i,j,\ldots,i',j',\ldots,i_k\}$ is an antichain and 
    \begin{align*}w&= \theta_x(Y\cap Y')\cdot (j,i) \cdot (j',i')\\
    &= x\cdot(u^{-1}(1),i_1,\ldots,i,j,\ldots,i',j',\ldots,i_k)\\
    &= \theta_x(Y\cup Y').
    \end{align*}
    If they do not commute, then we must have $i=i'$. There are two cases. In the first we have $x(j)>x(j')$, from which it follows that $\{i_1,\ldots, i,j,j',\ldots,i_k\}$ is an antichain and
    \begin{align*}w&=\theta_x(Y\cap Y')\cdot (j',i) \cdot (j,i)\\
    &= x\cdot (u^{-1}(1),\ldots,i,j,j',\ldots,i_k)\\
    &= \theta_x(Y\cup Y').
    \end{align*}
    Note that $\theta_x(Y)\cdot (j',i)$ is below $\theta_x(Y)$ in Bruhat order and thus could not be equal to $w$. The second case is that $x(j)<x(j')$. In fact this case is impossible: the two options for $w$ are $\theta_x(Y)\cdot (j,i)$ and $\theta_x(Y')\cdot (j',i)$. These are both less than $\theta_x(Y')$, so this case cannot happen. 

    Finally, we need to show that there is a reflection order which satisfies property (E) relative to $I_{\mathrm{st}}$. Indeed, we can take the order guaranteed by \Cref{lem:constructreflorder} using $t_1=(d+1,d+2),t_2=(d+2,d+3),\ldots,t_i=(n-1,n)$ and $t_{i+1}=(1,2),\ldots,t_{k-1}=(d-1,d),t_k=(d,d+1)$. This order satisfies property (E) due to the following fact: if $x,y\in [u,v]$ are such that $x\in I_{\mathrm{st}}$ and $x\xrightarrow{t} y$, then $y\in I_{\mathrm{st}}$ if and only if $\alpha_t$ is in the span of $\alpha_{t_1},\ldots,\alpha_{t_i}$.  
\end{proof}

\begin{remark}
    Remarkably, a statement equivalent to $\tR_{u,v}(q) = \tH_{u,v,I_{\mathrm{st}}}(q)$ for the standard hypercube decomposition was shown by Brenti in \cite[Corollary 3.9]{Brenti1997}, before the introduction of hypercube decompositions in \cite{Blundell}. That we independently converged on this result after studying the work of \cite{Blundell, davies} is further testament to the usefulness of the artificial intelligence tools used there
    as a method for mathematical discovery.
\end{remark}

\subsection{Extension to other types}
In this paper we focused on the Coxeter group $S_n$. Many of the theorems can be stated for an arbitrary Coxeter group, and one could ask what results remain true in those cases. When $I$ is a strong hypercube decomposition of a simple interval $[u,v]$, our proof that $\tR_{u,v}(q) = \tH_{u,v,I}(q)$ works identically in types $D$ and $E$, and with some extra effort (and a modification to condition (HC4)) should work for an arbitrary Coxeter group. However, this is not enough to prove combinatorial invariance for those intervals. The problem is that $\tH_{u,v,I}(q)$ is defined in terms of $\tR$-polynomials for other intervals (which are strictly smaller if $I\subsetneq [u,v]$). In order to prove inductively that isomorphic intervals have equal $\tR$-polynomials, we need to know that there exists an $I\subsetneq [u,v]$ giving a (strong) hypercube decomposition of $[u,v]$, and similarly for all lower subintervals of $[u,v]$. As an obstruction to this, in \cite[Remark 3.9]{Blundell} the authors give an example of an interval in the group $H_3$ which has no proper hypercube decompositions. 

We do not know of any interesting families of intervals outside of type $A$ which are known to have such an abundance of hypercube decompositions, but the recent work of \cite{Gurevich-Wang} may suggest more general decompositions that always occur.

\section{The conjecture of Brenti--Marietti}

Brenti and Marietti have independently made a conjecture, which we now describe, somewhat similar to \Cref{conj:inequality}. The present work appeared on the arXiv 
before their paper \cite{Brenti-Marietti} was available to us, although an early version of their work had circulated privately. 

We first recall the original definition of hypercube decomposition from \cite{Blundell}.

\begin{definition}[\cite{Blundell}, Section~3.4]
\label{def:bbdvw-hcd}
A diamond-closed order ideal $I=[u,z] \subset [u,v]$ is a \emph{hypercube decomposition} if for each $x \in I$ and each $Y \in \cA_x$, there is a unique $|Y|$-hypercube subgraph of $\Gamma(u,v)$ with source $x$ and containing the edges $x\to y$ for $y \in Y$.
\end{definition}

A hypercube decomposition $[u,z] \subset [u,v]$ is \emph{meet} \cite[Def.~5.4]{Brenti-Marietti} if, for all $w \in [u,v]$, the set $[u,z] \cap [u,w]$ has a unique maximal element. Brenti and Marietti prefer \emph{upper} hypercube decompositions wherein $I$ is an order filter of $[u,v]$ and which are poset-dual to our conventions. In this section we continue the use of lower hypercube decompositions for ease of comparison with the rest of the paper. 

\begin{definition}[\cite{Brenti-Marietti}, Definition~3.1]
\label{def:shortcut}
For $x \leq y \in W$, let $d(x,y)$ denote the fewest number of edges in a directed path from $x$ to $y$ in $\Gamma$. The set of \emph{shortcuts} with respect to an element $z \in [u,v]$ is
\[
W_{[u,v]}^z \coloneqq \{p \in [u,z] \mid \gamma \cap [u,z] = \{p\} \text{ for all paths } \gamma \text{ from $p$ to $v$ of length $d(p,v)$}\}.
\]
\end{definition}

\begin{conjecture}[\cite{Brenti-Marietti}, Conjecture~6.1]
\label{conj:shortcut}
Let $[u,z]$ be a meet hypercube decomposition of $[u,v] \subset S_n$. Then 
\[
\tR_{u,v}(q)=\sum_{p \in W_{[u,v]}^z} q^{d(p,v)}\tR_{u,p}(q).
\]
\end{conjecture}

When comparing \Cref{conj:shortcut} with the definition (\ref{eq:H-definition}) of $\tH_{u,v,I}(q)$ and \Cref{conj:inequality}, it is natural to hope that 
\[
W_{[u,v]}^z = \{x \in [u,z] \mid \theta_x(Y)=v \text{ for some $Y \in \cA_x$}\}
\]
for $[u,z]$ a meet hypercube decomposition of $[u,v] \subset S_n$. It is not known whether this is true. See \cite{our-bbdvw-paper} for another condition on hypercube decompositions, the \emph{numerical criterion}, which may be equivalent to the meet hypotheses, and for which \Cref{conj:inequality} is conjectured to hold with equality. 

\section*{Acknowledgements}
We are grateful to Francesco Brenti, Mario Marietti, Lauren Williams, and Geordie Williamson for their helpful comments. GTB was supported by a National Science Foundation grant DMS-1854512.

\bibliographystyle{plain}
\bibliography{arxiv-v5}
\end{document}